\newcommand{\sep}{\mathsf{sep}}
\colorlet{lightgray}{gray!40}
\newtheorem{remark}[theorem]{Remark}
\newcommand{\erank}{{\rm rank}_\epsilon}
\newcommand{\C}{\mathbb{C}}
\newcommand{ \rank}{{\rm rank}}
\newcommand{\iu}{\mathrm{i}}
\newcommand{\nmin}{n_{\min}}
\newcolumntype{"}{@{\hskip\tabcolsep\vrule width 1pt\hskip\tabcolsep}}
\setlist[description]{font=\normalfont\space}
\title{Compression properties for large Toeplitz-like matrices} 
\author{Bernhard Beckermann\thanks{Universit\'{e} de Lille, Lille, France. (Bernhard.Beckermann@univ-lille.fr). Supported in part by EXPOWER (H2020 MSC 101008231), MOMENTUM (FWO), and Labex CEMPI (ANR-11-LABX-0007-01).} \and
Daniel Kressner\thanks{Institute for Mathematics, EPFL, Switzerland, (daniel.kressner@epfl.ch).} \and
Heather Wilber\thanks{Department of Applied Mathematics, University of Washington, Seattle, USA (hdw27@uw.edu). Supported by the National Science Foundation under Grant No. DMS-2410045.} }
\begin{document}
\maketitle

\begin{abstract} 
Toeplitz matrices are abundant in computational mathematics, and there is a rich literature on the development of fast and superfast algorithms for solving linear systems involving such matrices.
Any Toeplitz matrix can be transformed into a matrix with off-diagonal blocks that are of low numerical rank.
This compressibility is relied upon in practice in a number of superfast Toeplitz solvers. In this paper, we show that the compression properties of these matrices can be thoroughly explained using their displacement structure and Zolotarev numbers. We provide explicit bounds on the numerical ranks of important submatrices that arise when applying HSS, HODLR and other approximations with hierarchical low-rank structure to transformed Toeplitz and Toeplitz-like matrices. Our results lead to very efficient displacement-based compression strategies that can be used to formulate adaptive superfast rank-structured solvers. 
\end{abstract}

\begin{keywords}
low-rank approximation, Toeplitz, numerical rank, displacement structure, superfast solver, hierarchical matrices, HSS matrices
\end{keywords}

\begin{AMS}
15B05, 15A18, 15A23, 15A06, 65F05, 41A20
\end{AMS}

\section{Introduction}

A matrix that is constant along the diagonals is called a Toeplitz matrix:
\[
 T = \begin{bmatrix}
      t_0 & t_{-1} & \cdots & t_{-n+1} \\
      t_1 & t_0 & \ddots & \vdots  \\
      \vdots & \ddots & \ddots & t_{-1} \\
      t_{n-1} & \cdots & t_1 & t_0
     \end{bmatrix} \in \C^{n\times n}.
\]
This and similar matrix structures appear frequently in computational mathematics, for instance in the context of discretized differential and integral equations, orthogonal polynomials, time series analysis, as well as image and signal processing~\cite{antoulas2005approximation, grenander1958toeplitz, gray2006toeplitz, heinig2013algebraic, kailath1995displacement, pan2012structured}. Although the development of fast (and superfast) solvers for linear systems with Toeplitz matrices is a classical topic that is more than $60$ years old, % Levison from 1947.,
there have still been significant advances in recent years. In particular, a novel and versatile class of solvers is based on the observation that any Toeplitz (or Toeplitz-like) matrix can be transformed into a Cauchy-like matrix $C$ with compressible off-diagonal blocks~\cite{chandrasekaran2007superfast, martinsson2005fast, xia2012superfast}. In turn, this allows one to leverage existing algorithms for matrices with hierarchical low-rank structure, not only for linear systems but also for, e.g., eigenvalue problems~\cite{Ou2022}, matrix functions, and matrix equations~\cite{KMR2019}.
The goal of this work is to perform an analysis of the compressibility of $C$.
Our analysis not only completes and improves upon existing analyses, but we will demonstrate that it is also the basis of new, deterministic algorithms for compressing $C$.

Most fast direct solvers for a matrix $T$ with Toeplitz and related structures are based on the observation that the displacement rank of $T$ is small. More specifically, in this work we consider the Sylvester displacement equation
\begin{equation} 
\label{eq:toepdisp}
Z T -TZ = GH^*, \quad Z = \begin{pmatrix} 0 & 1 \\ I_{n-1} & 0 \end{pmatrix}, \quad\rank({G}{H}^*) = \rho \ll n. 
\end{equation}
For a Toeplitz matrix $T$, one can choose the displacement rank $\rho=2$. 
Our developments also apply to Toeplitz-like matrices~\cite{kailath1995displacement,pan2012structured}, that is, matrices $T$ that satisfy~\eqref{eq:toepdisp} for some $\rho$ that is still small but possibly larger than $2$. Let us stress that the choice of displacement equation is by no means unique~\cite{kailath1995displacement}.
Our choice~\eqref{eq:toepdisp} is simple but not suitable for all purposes because the linear operator $T\mapsto Z T -TZ$ is singular and, in turn, $T$ cannot be fully reconstructed from the  \textit{generator matrices} $G$ and $H$. Other choices have been made in the literature, using coefficient matrices different from $Z$ or a Stein equation instead of a Sylvester equation.

Classical fast solvers of complexity \smash{$\mathcal{O}(n^2)$}, such as the generalized Schur algorithm, exploit the observation that Schur complements and inverses preserve the displacement rank, which in turn allows one to perform certain operations on $T$, such as the LU factorization, entirely in terms of the generator matrices; see~\cite{kailath1995displacement} for a survey. Classical superfast solvers of complexity \smash{$\mathcal{O}(n \log^2 n)$} are obtained by combining this idea with a divide-and-conquer approach; see~\cite{Huckle1998} and the references therein.

The matrix $Z$ in~\eqref{eq:toepdisp} is circulant and thus easily diagonalized by a unitary discrete Fourier transform $F$. In turn, the transformed matrix \smash{$C = FTF^*$}  satisfies a displacement equation with diagonal coefficients. This observation has been used in~\cite{gohberg1995fast} to derive fast solvers with enhanced numerical stability; pivoting does not destroy the Cauchy-like structure of $C$. More recently, several works~\cite{chandrasekaran2007superfast, martinsson2005fast, xia2012superfast} have derived new superfast solvers from the fact that off-diagonal submatrices of $C$ are well-approximated by low-rank matrices. Algorithm~\ref{alg:generalsolver} describes the general framework of these solvers applied to a linear system
\begin{equation}
\label{eq:tls}
Tx = b, \quad T \in \mathbb{C}^{n \times n}, 
\end{equation}
with a Toeplitz-like matrix $T$  satisfying~\eqref{eq:toepdisp}.
\begin{algorithm}[t!]
\caption{Superfast solvers for a Toeplitz-like linear system~\eqref{eq:tls}.}
\label{alg:generalsolver}
\begin{algorithmic}[1]
\State Compute $\widetilde{b} = Fb$, $\widetilde{G} = FG$, $\widetilde{H} = FH$. 
\State Determine \smash{$\widetilde{C}$}, a hierarchical low-rank approximation of $C = FTF^*$ from its generators $\widetilde{G}, \widetilde{H}$. 
\State Solve \smash{$\widetilde{C} \widetilde{x} = \widetilde{b}$}. 
\State Compute \smash{$x = F^*\widetilde{x}.$} 
\end{algorithmic}
\end{algorithm}

Steps 1 and 4 of Algorithm~\ref{alg:generalsolver} can be easily accomplished in \smash{$\mathcal{O}(n \rho \log n)$} operations using fast Fourier transforms (FFTs). 
 For Step 2, several hierarchical approximations of $C$ are possible. This includes the relatively simple HODLR (Hierarchical Off-Diagonal Low-Rank) format, as well as the more involved SSS (sequentially semi-separable) and HSS (hierarchical semi-separable) formats used in~\cite{chandrasekaran2007superfast,martinsson2011fast, xia2012superfast}.
 
 In all these formats, the approximation \smash{$\widetilde{C}$} is constructed by recursively partitioning $C$ into successively smaller submatrices and replacing each submatrix by a low-rank approximation. HSS format features additional structure; the low-rank factors between different levels of the recursion tree are nested; see Section~\ref{sec:HSSintro}. 
Its use leads to particularly efficient \smash{$\mathcal{O}(n p^2)$} direct solvers for Step 3~\cite{chandrasekaran2006fast,xia2012superfast}, 
assuming that the rank of each low-rank approximant in \smash{$\widetilde{C}$ is bounded by $p$}.
The efficiency of Steps 2 and 3 depends on (i) the hierarchical low-rank format used, (ii) the compressibility of $C$,  (iii) the method of approximation used to find  \smash{$\widetilde{C}$}, and (iv) the type of solver used in Step 3. 
In this work, we focus on analysis related to (ii) and (iii).  

\subsection{Main results}

The primary novelty in our paper is the introduction of new bounds that more thoroughly explain the low-rank properties of the Cauchy-like matrix $C$. This mproves upon some results that currently exist in the literature, and it provides explicit bounds on the singular values of the relevant submatrices of $C$. We show that the alternating implicit direction (ADI) method can be used to construct low rank approximations with errors controlled by our bounds. This leads to an efficient and adaptive algorithm for constructing HODLR and HSS approximations to $C$ in Step 2 of Algorithm~\ref{alg:generalsolver}. 
The current state-of-the-art for solving~\eqref{eq:tls} given an arbitrary Toeplitz matrix is based on randomized linear algebra~\cite{xia2012superfast}. Our analysis more thoroughly explains the effectiveness of this method, and it also serves as the basis of competitive deterministic analogue methods. 

To describe the compressibility of $C$, we use the notion of numerical rank, also called $\epsilon$-rank:
\begin{definition} 
\label{def:erank}
Let \smash{$\sigma_1(X) \ge \sigma_2(X) \ge \cdots \ge \sigma_{\min\{m,n\}}(X) \ge 0 $} denote the singular values of a matrix \smash{$X \in \mathbb{C}^{m \times n}$}. Given $0< \epsilon < 1$, the $\epsilon$-rank of $X$  is  \smash{$\erank(X) =k$} for the smallest integer $k$ such that \smash{$\sigma_{k+1}(X) \leq \epsilon \|X\|_2$},
where $\| \cdot \|_2 = \sigma_1(X)$ denotes the spectral norm. 
\end{definition}
If  \smash{$\erank(X) \leq k$} then $X$ is well-approximated by a rank $k$ matrix \smash{$X^{(k)}$} in the sense that \smash{$\|X - X^{(k)}\|_2 \leq\epsilon \|X\|_2$}. 

While some theoretical justification on the rank structure of $C$ is given in~\cite{chandrasekaran2007superfast, martinsson2011fast}, these arguments are incomplete (see section~\ref{sec:bounds}). For example, they do not duly justify the use of HODLR, HSS, or other weakly-admissible~\cite{hackbusch2004hierarchical} hierarchical approximations to $C$.  These arguments also do not supply a priori estimates on the numerical ranks of submatrices of $C$. In contrast, we explicitly bound the numerical ranks of the off-diagonal submatrices of $C$, including those arising in weakly admissible hierarchical formats. This leads to adaptive hierarchical low-rank approximation schemes that automatically select appropriate approximation parameters based on a relative tolerance parameter $0 < \epsilon < 1$.
Our bounds show that generally, an off-diagonal submatrix $X$ of $C$ has $\epsilon$-rank of order $\mathcal{O}(\rho  \log n \log (1/\epsilon) )$.  The compressibility of submatrices sufficiently separated from the main diagonal of the matrix is even better; %they 
we prove that in this case, the bound is \smash{$\mathcal{O} (\rho \log \tfrac{1}{\epsilon})$}, completely independent of $n$ (see Theorem~\ref{thm:bounds}).

\subsection{Summary} The rest of this paper is organized as follows: 
In Section~\ref{sec:lowrankprop}, we review the essential ideas from~\cite{Beckermann2019} and then use them to derive new bounds that characterize of the rank structure of $C$ (Section~\ref{sec:bounds}). 
In sections~\ref{sec:HODLR} and~\ref{sec:HSSintro}, we use our bounds to formulate efficient displacement-based compression strategies for finding HODLR and HSS approximations to $C$. Finally, we provide some
preliminary numerical evidence in Section~\ref{sec:practicalsolver}. 

\section{Compressibility of Cauchy-like matrix $C$}
\label{sec:lowrankprop}

In~\cite{Beckermann2019}, the $\epsilon$-rank for certain matrices with displacement structure is bounded via solutions to certain rational approximation problems. We derive bounds on the off-diagonal blocks of the Cauchy-like matrix $C$ using a similar strategy. 
 
 \subsection{Displacement structure of $C$} 
 
 \label{sec:displacement}

 Starting with the displacement equation satisfied by $T$ in~\eqref{eq:toepdisp}, we first diagonalize $Z$ with a unitary discrete Fourier transform. Letting $\omega = \exp ( \iu \pi/n)$, we have that 
\begin{equation} 
\label{eq:F}
 FZF^* = D, \quad F = \left( \frac{\omega^{2jk}}{\sqrt{n}}\right)_{j, k = 0, \ldots, n-1}, 
\end{equation} 
where \smash{$D = {\rm diag}( \omega^0, \omega^2, \ldots, \omega^{2n-2})$}. Here and in the following, we adopt the convention of indexing matrix entries starting from 0. Inserting~\eqref{eq:F} into~\eqref{eq:toepdisp}, it follows that the transformed matrix \smash{$C = F T F^*$} satisfies the displacement equation 
\begin{equation} 
\label{eq:cauchylike-disp}
D C - C D = \widetilde{G}\widetilde{H}^*, \quad \widetilde{G} = FG , \quad \widetilde{H} = F H ,
\end{equation}
with the same displacement rank $\rho$.

Given the the right-hand side of~\eqref{eq:cauchylike-disp}, each \emph{off-diagonal} entry $c_{jk}$, $j\not = k$, of $C$ can be recovered by multiplying the corresponding off-diagonal entry of $\widetilde{G}\widetilde{H}^*$ with
$1/(\omega^{2j} - \omega^{2k})$. The latter can be viewed as the discretization of the function $f(x,y) = 1/(x-y)$ with $x,y$ both on the unit circle. When $|x-y|$ is large enough, a truncated Taylor expansion of $f$ could be used to construct good low-rank approximations to
submatrices of $C$ only containing indices $(j,k)$ for which $|\omega^{2j} - \omega^{2k}|$ is not small; see, e.g.,~\cite{Boerm2010}.
Intuitively, this suggests that submatrices of $C$ far from the main diagonal as well as from the top-right and bottom-left corners (due to the periodicity of $\omega^{2j}$) should be compressible.  To get sharper bounds, especially for regions close to the singularity of $f$, we appeal to arguments based on displacement structure instead of Taylor expansions. 

Due to the singularity of the linear operator $C \mapsto DC - CD$, the diagonal entries of $C$ cannot be recovered from the right-hand side of~\eqref{eq:cauchylike-disp}. These missing entries of $C$ can be cheaply computed from $T$ by exploiting the original displacement equation~\eqref{eq:toepdisp}. To see this, consider the operator $\mathcal{S}:  \mathbb{C}^{n \times n} \to \mathbb{C}^{n \times n}$ defined by $\mathcal{S}(T) := ZT-TZ$. By the definition of $Z$, the null space of $\mathcal{S}$ is the subspace of all cyclic $n \times n$ matrices. Letting 
$\Pi(T)$ denote the orthogonal projection of $T$ onto this null space, it follows that $F \Pi(T) F^*$ is diagonal. Its diagonal entries coincide with the diagonal entries of $C$, which can thus be computed by
applying an FFT to the first column of $\Pi(T)$. This first column is cheap to compute because the projection
$\Pi(T)$ is just the average along the cyclic diagonals of $T$. In particular, when $T$ is a Toeplitz matrix,
the $k$th entry of this vector is given by 
$ \left( (n-k)t_{k}  + k t_{-n+k}  \right)/n.$
In summary, only $\mathcal{O}(n \log n)$ operations are needed to compute the diagonal of $C$. Throughout the rest of the paper, we will therefore focus on computing the off-diagonal entries of $C$.

 \subsection{Singular value decay for matrices with displacement structure}
 \label{sec:zolotarev}
 Let us consider a general displacement equation of the form $AX - XB = Y$, where $A$ and $B$ are normal matrices with disjoint spectra $\Lambda(A)$ and $\Lambda(B)$. It is shown in~\cite{Beckermann2019} that 
\begin{equation} 
\label{eq:zolobound_svs}
\sigma_{k \rho + 1}(X) \leq Z_{k}(E_1, E_2) \|X\|_2,
\end{equation}
where $\rho = \rank(Y)$ and \smash{$E_1, E_2$} are disjoint subsets of the complex plane with \smash{$\Lambda(A) \subset E_1$}, \smash{$\Lambda(B) \subset E_2$}.
The $k$th \textit{Zolotarev number} \smash{$Z_{k}(E_1, E_2)$} is defined by 
\begin{equation} \label{eq:zolotarev}
 Z_k(E_1,E_2) := \inf_{r\in \mathcal{R}_{k,k}} \frac{\sup_{z \in E_1} |r(z)| }{\inf_{z \in E_2} |r(z)|},
\end{equation}
where $\mathcal{R}_{k,k}$ contains all rational functions with numerator and denominator of degree at most $k$.

When \smash{$E_1$} and \smash{$E_2$} are well-separated,
one expects that \smash{$Z_k(E_1,E_2)$} decays rapidly as $k$ increases and, in turn, the singular values of $X$ also decay rapidly, provided that $\rho$ is small.\footnote{Note that one can still establish rapid singular value decay for large $\rho$ when the singular values of $Y$ decay sufficiently fast~\cite{townsend2018singular}.} 
To gain meaningful insights, simple tight and non-asymptotic bounds for \smash{$Z_k(E_1, E_2)$} are needed, which are only explicitly available for a few special cases of $E_1,E_2$; see~\cite{Beckermann2019}. In~\cite{rubin2022bounding}, a bound for the Zolotarev number is given for rather general $E_1$, $E_2$, but this bound depends on an invariant parameter associated with the conformal modulus of $E_1$ and $E_2$ and on the behavior of Faber rational functions, which can be difficult to tightly control for finite $k$. In general, this parameter can only be computed numerically. As we will see below, the sets $E_1$ and $E_2$ relevant in this work can always be taken as subarcs on the unit circle, which will allow us to
formulate and analyze explicit bounds on $Z_k(E_1, E_2)$.

\subsection{Bounding the $\epsilon$-ranks of submatrices of $C$} 
\label{sec:bounds}

Clearly, the technique from Section~\ref{sec:zolotarev} cannot be applied directly to the Sylvester equation~\eqref{eq:cauchylike-disp}; its coefficients are identical and thus have spectra that are anything but disjoint. 

On the other hand, due to the diagonal structure of $D$, any submatrix of $C$ satisfies a displacement equation as well. For subsets $J$ and $K$ of $\{0,\ldots,n\!-\!1\}$, we let $C_{JK}$ denote the $|J|\times |K|$ submatrix of $C$ containing all entries $c_{jk}$ with $j\in J$, $k \in K$.
By~\eqref{eq:cauchylike-disp}, we have that
\begin{equation}
\label{eq:dispY}
 D_J C_{JK} - C_{JK} D_K = \widetilde{G}_J \widetilde{H}_K^*,
\end{equation} 
where the diagonal matrix \smash{$D_J$} contains the diagonal elements $\omega^{2j}$ for $j\in J$, $D_K$ is defined analogously, and 
$\widetilde{G}_J$, $\widetilde{H}_K$ contain the corresponding rows of 
$\widetilde{G}$, $\widetilde{H}$.

As long as $J \cap K = \emptyset$, the spectra of $D_J$, $D_K$ are disjoint and the equation~\eqref{eq:dispY} is nonsingular. To apply the techniques from Section~\ref{sec:zolotarev}, we require the stronger assumption that the spectra of $D_J$ and $D_K$ are contained in disjoint arcs and we need to measure the relative gap between these arcs, taking the periodicity of $\omega^{2j}$ into account. This motivates the following definition.
\begin{definition} \label{def:mlsubmatrix}
Given index sets $J, K \subset \{0,\ldots,n\!-\!1\}$ and integers $m \ge 1$, $\sep \ge 1$, we call $C_{JK}$ an \emph{$(m, \sep)$ submatrix of $C$} if 
\begin{align*}
 &\max\{ |k\!-\!k^\prime|: k, k^\prime \in K \} \le m\!-\!1, \\
 &\min\{ |j\!-\!k|: j \in J, k \in K\} \ge \sep, \quad \min\{ |j\!-\!k\!-\!n|: j \in J, k \in K\} \ge \sep.
\end{align*}
\end{definition}
To illustrate Definition~\ref{def:mlsubmatrix}, consider $K = \{0,\ldots,m\!-\!1\}$. The maximal admissible row index set of an $(m, \sep)$ submatrix $C_{JK}$ is $J = \{m,m+2,\ldots,n\!-\!1\}$ for $\sep = 1$ and $J = \{2m,2m\!+\!1,\ldots,n\!-m\!-\!1\}$ for $\sep = m\!+\!1$; see also Figure~\ref{fig:submatrices}.

  \begin{figure}
\centering
\begin{tikzpicture}[scale=0.34, every node/.style={scale=0.85}]
\hspace{3pt}
%fill pattern background
\draw[ fill = lightgray, fill opacity = .3](0, 0) rectangle(1,9); 
\draw[ fill = lightgray, fill opacity = .3](1,0) rectangle(2,8); 
\draw[ fill = lightgray, fill opacity = .3](2,0) rectangle(3,7); 
\draw[ fill = lightgray, fill opacity = .3](3,0) rectangle(4,6); 
\draw[ fill = lightgray, fill opacity = .3](4,0) rectangle(5,5); 
\draw[ fill = lightgray, fill opacity = .3](5,0) rectangle(6,4); 
\draw[ fill = lightgray, fill opacity = .3](6,0) rectangle(7,3); 
\draw[ fill = lightgray, fill opacity = .3](7,0) rectangle(8,2); 
\draw[ fill = lightgray, fill opacity = .3](8,0) rectangle(9,1); 
\draw[ fill = lightgray, fill opacity = .3](9,1) rectangle(10,10); 
\draw[ fill = lightgray, fill opacity = .3](8,2) rectangle(9,10); 
\draw[ fill = lightgray, fill opacity = .3](7,3) rectangle(8,10); 
\draw[ fill = lightgray, fill opacity = .3](6,4) rectangle(7,10); 
\draw[ fill = lightgray, fill opacity = .3](5,5) rectangle(6,10); 
\draw[ fill = lightgray, fill opacity = .3](4,6) rectangle(5,10); 
\draw[ fill = lightgray, fill opacity = .3](3,7) rectangle(4,10); 
\draw[ fill = lightgray, fill opacity = .3](2,8) rectangle(3,10); 
\draw[ fill = lightgray, fill opacity = .3](1,9) rectangle(2,10); 

% fill pattern main diag
\draw[fill= yellow, fill opacity = .2](0, 10) rectangle(1,9); 
\draw[fill= yellow, fill opacity = .2](1,9) rectangle(2,8); 
\draw[fill= yellow, fill opacity = .2](2,8) rectangle(3,7); 
\draw[fill= yellow, fill opacity = .2](3,7) rectangle(4,6); 
\draw[fill= yellow, fill opacity = .2](4,6) rectangle(5,5); 
\draw[fill= yellow, fill opacity = .2](5,5) rectangle(6,4); 
\draw[fill= yellow, fill opacity = .2](6,4) rectangle(7,3); 
\draw[fill= yellow, fill opacity = .2](7,3) rectangle(8,2); 
\draw[fill= yellow, fill opacity = .2](8,2) rectangle(9,1); 
\draw[fill= yellow, fill opacity = .2](9,1) rectangle(10,0); 
%label
\node[] at (5, -1) {(a) $\sep = 1$}; 
\end{tikzpicture}$\, $
\begin{tikzpicture}[scale=0.34, every node/.style={scale=0.85}]
\hspace{4pt}
%fill pattern background
\draw[fill= lightgray, fill opacity = .3](0, 1) rectangle(1,8); 
\draw[fill= lightgray, fill opacity = .3](1,0) rectangle(2,7); 
\draw[fill= lightgray, fill opacity = .3](2,0) rectangle(3,6); 
\draw[fill= lightgray, fill opacity = .3](2,9) rectangle(3,10); 
\draw[fill= lightgray, fill opacity = .3](3,0) rectangle(4,5); 
\draw[fill= lightgray, fill opacity = .3](3,8) rectangle(4,10);
\draw[fill= lightgray, fill opacity = .3](4,0) rectangle(5,4); 
\draw[fill= lightgray, fill opacity = .3](4,7) rectangle(5,10); 
\draw[fill= lightgray, fill opacity = .3](5,0) rectangle(6,3); 
\draw[fill= lightgray, fill opacity = .3](5,6) rectangle(6,10); 
\draw[fill= lightgray, fill opacity = .3](6,0) rectangle(7,2); 
\draw[fill= lightgray, fill opacity = .3](6,5) rectangle(7,10); 
\draw[fill= lightgray, fill opacity = .3](7,0) rectangle(8,1); 
\draw[fill= lightgray, fill opacity = .3](7,3) rectangle(8,10); 
\draw[fill= lightgray, fill opacity = .3](8,3) rectangle(9,10); 
\draw[fill= lightgray, fill opacity = .3](9,2) rectangle(10,9); 
% fill pattern main diag
\draw[fill= yellow, fill opacity = .2](0, 10) rectangle(1,9); 
\draw[fill= yellow, fill opacity = .2](1,9) rectangle(2,8); 
\draw[fill= yellow, fill opacity = .2](2,8) rectangle(3,7); 
\draw[fill= yellow, fill opacity = .2](3,7) rectangle(4,6); 
\draw[fill= yellow, fill opacity = .2](4,6) rectangle(5,5); 
\draw[fill= yellow, fill opacity = .2](5,5) rectangle(6,4); 
\draw[fill= yellow, fill opacity = .2](6,4) rectangle(7,3); 
\draw[fill= yellow, fill opacity = .2](7,3) rectangle(8,2); 
\draw[fill= yellow, fill opacity = .2](8,2) rectangle(9,1); 
\draw[fill= yellow, fill opacity = .2](9,1) rectangle(10,0); 
% fill pattern off diag lower
\draw[fill= blue, fill opacity = .2](0,1) rectangle(1,0); 
\draw[fill= blue, fill opacity = .2](0,9) rectangle(1,8); 
\draw[fill= blue, fill opacity = .2](1,8) rectangle(2,7); 
\draw[fill= blue, fill opacity = .2](2,7) rectangle(3,6); 
\draw[fill= blue, fill opacity = .2](3,6) rectangle(4,5); 
\draw[fill= blue, fill opacity = .2](4,5) rectangle(5,4); 
\draw[fill= blue, fill opacity = .2](5,4) rectangle(6,3); 
\draw[fill= blue, fill opacity = .2](6,3) rectangle(7,2); 
\draw[fill= blue, fill opacity = .2](7,2) rectangle(8,1); 
\draw[fill= blue, fill opacity = .2](8,1) rectangle(9,0); 
%fill pattern off diag upper
\draw[fill= blue, fill opacity = .2](9,10) rectangle(10,9); 
\draw[fill= blue, fill opacity = .2](1,10) rectangle(2,9); 
\draw[fill= blue, fill opacity = .2](2,9) rectangle(3,8); 
\draw[fill= blue, fill opacity = .2](3,8) rectangle(4,7); 
\draw[fill= blue, fill opacity = .2](4,7) rectangle(5,6); 
\draw[fill= blue, fill opacity = .2](5,6) rectangle(6,5); 
\draw[fill= blue, fill opacity = .2](6,5) rectangle(7,4); 
\draw[fill= blue, fill opacity = .2](7,4) rectangle(8,3); 
\draw[fill= blue, fill opacity = .2](8,3) rectangle(9,2); 
\draw[fill= blue, fill opacity = .2](9,2) rectangle(10,1); 
%label
\node[] at (5, -1) {(b) $\sep = m+1$}; 
\end{tikzpicture}
\caption{Definition~\ref{def:mlsubmatrix} applied to an $n\times n$ matrix partitioned into block columns of width $m$. The largest $(m,\sep)$ submatrix in each block column is shown in gray. (a) For $\sep =1$ only the diagonal blocks (yellow) are not admissible. 
(b) For $\sep =m+1$, also the super/subdiagonal blocks as well as the bottom left/top right blocks (blue) are not admissible.
\label{fig:submatrices}}
\end{figure}

The ``pinched'' block columns displayed in Figure~\ref{fig:submatrices} (a) for $\sep = 1$ correspond to the notion of HSS block columns that play a central role in the compression in the HSS format. The notion of HSS block rows is obtained by applying Definition~\eqref{def:mlsubmatrix} to $C^*$. In the literature on hierarchical and $\mathcal H^2$ matrices~\cite{hackbusch2004hierarchical}, the case $\sep = 1$ corresponds to the 
weak admissibility criterion, while $\sep \ge m+1$ loosely corresponds to the strong admissibility criterion.  

\begin{theorem} 
\label{thm:bounds}
Consider integers $\sep \geq 1$, $m \geq 2$ such that $n \geq 2(m  +  \sep   -  1)$. Let $C_{JK}$ be an $(m,\sep)$ submatrix of a matrix $C$ satisfying~\eqref{eq:cauchylike-disp} with \smash{$\rank(\widetilde{G}\widetilde{H}^*) \leq \rho$}.  Then, 
\begin{equation}
\label{eq:erank_bnd}
\erank(C_{JK}) \leq \rho \left \lceil \frac{2}{\pi^2} \log\left( 4 \frac{ m + \sep \!-\!1}{\sep} \right) \log \left( \frac{4}{\epsilon} \right)  \right \rceil, 
\end{equation}
\end{theorem}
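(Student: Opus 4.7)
The plan is to combine the singular value bound from Section~\ref{sec:zolotarev} with an explicit Zolotarev estimate on two disjoint arcs of the unit circle. The submatrix $C_{JK}$ inherits from $C$ the Sylvester equation~\eqref{eq:dispY} with diagonal coefficients $D_J, D_K$, and its right-hand side has rank at most $\rho$. So~\eqref{eq:zolobound_svs} gives
\[
\sigma_{k\rho+1}(C_{JK}) \le Z_k(E_1,E_2)\,\|C_{JK}\|_2
\]
for any disjoint closed sets $E_1 \supseteq \Lambda(D_J)$ and $E_2 \supseteq \Lambda(D_K)$. The task reduces to choosing suitable $E_1, E_2$ and controlling $Z_k(E_1, E_2)$.

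Next I would translate Definition~\ref{def:mlsubmatrix} into geometry on the unit circle. After a cyclic relabeling (equivalent to a rotation of the circle, which preserves Zolotarev numbers), we may assume $K \subseteq \{0,\ldots,m\!-\!1\}$, so $\Lambda(D_K)$ lies in a closed arc $E_2$ of angular length $2\pi(m-1)/n$. The separation condition then places $\Lambda(D_J)$ inside the complementary arc $E_1$ obtained by excising gaps of angular length $2\pi\,\sep/n$ from either side of $E_2$. The assumption $n \ge 2(m+\sep-1)$ ensures that $E_1$ is nonempty and strictly disjoint from $E_2$. A Möbius transformation $\phi$ of the Riemann sphere can then be chosen to send the unit circle to the real line and map the pair $(E_2, E_1)$ onto symmetric intervals $([-b,-a], [a,b])$ for some $0 < a < b$; by Möbius invariance of the Zolotarev number, $Z_k(E_1,E_2) = Z_k([-b,-a],[a,b])$.

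The parameter $b/a$ is extracted via cross-ratios. Using the identity $e^{\iu \alpha} - e^{\iu \beta} = 2\iu e^{\iu(\alpha+\beta)/2} \sin((\alpha-\beta)/2)$ together with $\sin(\pi - x) = \sin x$, a direct computation shows that the cross-ratio of the four arc endpoints collapses to $\sin^2(\pi(m+\sep-1)/n)/\sin^2(\pi\,\sep/n)$. Equating this with the cross-ratio $(a+b)^2/(4ab)$ of $(-b,-a,a,b)$ and solving gives $b/a \le 4\sin^2(\pi(m+\sep-1)/n)/\sin^2(\pi\,\sep/n)$. Because $\sin x/x$ is decreasing on $(0,\pi/2]$, and both $\pi(m+\sep-1)/n$ and $\pi\,\sep/n$ lie in that interval thanks to $n \ge 2(m+\sep-1)$, we get $\sin(\pi(m+\sep-1)/n)/\sin(\pi\,\sep/n) \le (m+\sep-1)/\sep$, and hence $4b/a \le (4(m+\sep-1)/\sep)^2$.

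Finally, I would invoke the standard exponential bound from~\cite{Beckermann2019} for Zolotarev numbers on two symmetric real intervals, of the form $Z_k([-b,-a],[a,b]) \le 4\exp(-\pi^2 k/\log(4b/a))$, to obtain $Z_k(E_1,E_2) \le 4\exp(-\pi^2 k/(2\log(4(m+\sep-1)/\sep)))$. Requiring this to be at most $\epsilon$ and solving for the smallest integer $k$ yields precisely the ceiling expression in~\eqref{eq:erank_bnd}. The main obstacle is the third step: the cross-ratio calculation and the careful bookkeeping of constants (especially the factor of $2$ in the exponent and the factor of $4$ inside the logarithm) must be tracked precisely, and one must also verify that the Zolotarev estimate from~\cite{Beckermann2019} takes exactly the shape used above or adjust constants accordingly, with particular attention to the boundary case $n = 2(m+\sep-1)$ where the trigonometric inequalities are tightest.
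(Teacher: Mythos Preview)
Your proposal is correct and follows essentially the same route as the paper: reduce to arcs via a rotation, pass to real intervals by a M\"obius map, invoke the Zolotarev bound for intervals from~\cite{Beckermann2019}, and finish with the trigonometric inequality $\sin x/\sin y \le x/y$ for $0<y<x\le \pi/2$. The only cosmetic difference is that the paper writes down the explicit M\"obius map $T(z)=\mathrm i\tan(\beta/2)\,(z+1)/(z-1)$ and tracks the parameter $\kappa=\tan(\alpha/2)/\tan(\beta/2)$, whereas you invoke an abstract M\"obius map to symmetric intervals and track $b/a$; the cross-ratio you compute coincides with the paper's $\gamma$, and your chain $4b/a\le 16\gamma\le\bigl(4(m+\sep-1)/\sep\bigr)^2$ lands on exactly the same final constant.
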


The bound~\eqref{eq:erank_bnd} implies that there are universal constants $C_1,C_2$ (independent of $J,K,m,n,\rho,\epsilon$)
such that $\erank(C_{JK})$ is bounded by $C_1 \rho \log m \log \epsilon^{-1}$ in the general, weakly admissible case ($\sep \ge 1$)
and by $C_2 \rho \log \epsilon^{-1}$ in the strongly admissible case ($\sep  \ge m + 1$).
\begin{proof}[Proof of Theorem~\ref{thm:bounds}]
By assumption, $K \subset [j,j\!+\!m\!-\!1]$ for some $k \ge 0$.
Let us recall that $C_{JK}$ satisfies the displacement equation~\eqref{eq:dispY}. By multiplying both sides of this equation with $\omega^{-2j-m+1}$,

we may assume without loss of generality that the spectra of $D_K$ and $D_J$ are contained within the following arcs of the unit circle:
\begin{equation*} %\label{eq:arcs}
 \Lambda(D_J) \subset \mathcal{A}_J := \{  e^{\iu t}: t \in [ \beta, 2 \pi \!- \!\beta]\},\ \Lambda(D_K) \subset \mathcal{A}_K := \{ e^{\iu t} \, :\, t \in [-\alpha, \alpha]\},
\end{equation*}
with 
\begin{equation} 
\label{eq:tau_kappa}
0 < \alpha = \frac{\pi}{n}(m\!-\!1) < \beta = \frac{\pi}{n}(m \!-\! 1\! + \!2 \cdot \sep) \leq \pi \!-\! \alpha.
\end{equation}
By~\eqref{eq:zolobound_svs}, we have that
\begin{equation} \label{eq:zolonew}
 \sigma_{\rho k + 1}(C_{JK}) \le Z_k(\mathcal{A}_J,\mathcal{A}_K) \|C_{JK}\|_2.
\end{equation}
To bound $Z_k( \mathcal{A}_J,\mathcal{A}_K)$ we observe that the M\"obius
map $T(z)= \mathrm i \tan(\beta/2) \frac{z+1}{z-1}$ maps \smash{$\mathcal{A}_J \cup \mathcal{A}_K$} onto the union of two closed disjoint real intervals:
\[
T(\mathcal A_J) = [-1,1],\quad T(\mathcal A_K) = \big\{x\in \mathbb R\colon |x| \ge 1/|\kappa|\big\}, \quad \kappa = \frac{\tan(\alpha/2)}{\tan(\beta/2)} \in (0,1).
\]
Since Zolotarev numbers are invariant under M\"obius transformations~\cite{Beckermann2019}, we have that
$
 Z_k(\mathcal{A}_J, \mathcal{A}_K) = Z_k\big(T_0(T(\mathcal A_J)),T_0(T(\mathcal A_k))\big)$ with a suitable M\"obius map $T_0$ such that $T_0(T(\mathcal A_J))$ and $T_0(T(\mathcal A_k))$ are real disjoint \emph{compact} intervals.

This allows us to use known results on Zolotarev numbers for intervals~\cite[Cor. 4.2]{Beckermann2019}: 
\begin{equation*} \label{eq:zolo_bnd}
%\label{eq:zolotarev_boundnew}
Z_k\big(T_0(T(\mathcal A_J)),T_0(T(\mathcal A_k))\big) \leq 4 \rho_0^{-2k}, \quad \rho_0 = \exp\Big( \frac{\pi^2}{4 \mu (1/\sqrt{\gamma})}\Big) > 1 , 
\end{equation*}
where $\mu(\cdot)$ is the \textit{Gr\"otzsch ring function}, which is related to the complete elliptic integrals of the first kind~\cite[Sec.~3]{Beckermann2019},
and $\gamma$ is the cross-ratio of the endpoints of $T_0(T(\mathcal A_J))$ and of $T_0(T(\mathcal A_k))$. Since the cross-ratio of four points in the complex plane is invariant under M\"obius transformations, we obtain the following expressions for our cross-ratio $\gamma$:
\begin{equation} 
\label{eq:cr_arcsnew}
\gamma 
= \left(\frac{1+\kappa}{1-\kappa} \right)^2
= \left( \frac{ \sin \left( \tfrac{\beta + \alpha}{2} \right)}{\sin \left( \tfrac{\beta - \alpha}{2} \right)} \right)^2.
\end{equation}
Because of \smash{$\mu(1 / \sqrt{\gamma})  \leq \log( 4 \sqrt{\gamma})$}~\cite[19.9.5]{olver2010nist}, we obtain
 \begin{align} \label{eq:auxinequality}
\mu(1 / \sqrt{ \gamma} ) & 
\leq \log\left( 4 \frac{1+\kappa}{1-\kappa} \right) \le \log \left( 4 \frac{ \beta + \alpha}{\beta - \alpha} \right)=
\log \left( 4 \frac{ m+\sep-1}{\sep} \right),
 \end{align}
where the second inequality uses the concavity of the sine function.
 In summary,
 \begin{equation}
\label{eq:bound_zolo_arcs}
 Z_k(\mathcal{A}_J, \mathcal{A}_K)  \leq 4 \xi^{-k}, \quad \xi = \exp \left(  \pi^2 / \left( 2 \log \left( 4 \frac{ m+\sep-1}{\sep} \right)  \right) \right). 
\end{equation}
 Combined with~\eqref{eq:zolonew} this proves~\eqref{eq:erank_bnd}. 
\end{proof}

\begin{figure} 
 \centering
 %\begin{minipage}{.35\textwidth} 
 %\centering
 \vspace{-.5cm}
  \begin{overpic}[width=.35\textwidth]{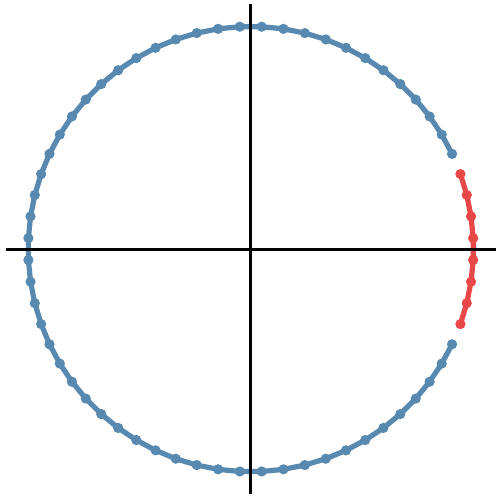} 
  \put(46,101) {$Im$}
  \put(-10, 48) {$Re$}
  \put(17, 71){$\mathcal{A}_J$}
  \put(78, 58){$\mathcal{A}_K$}
  \end{overpic}
  %\end{minipage}
   %\begin{minipage}{.35\textwidth} 
 \centering
 \hspace{1cm}
  \begin{overpic}[width=.4\textwidth]{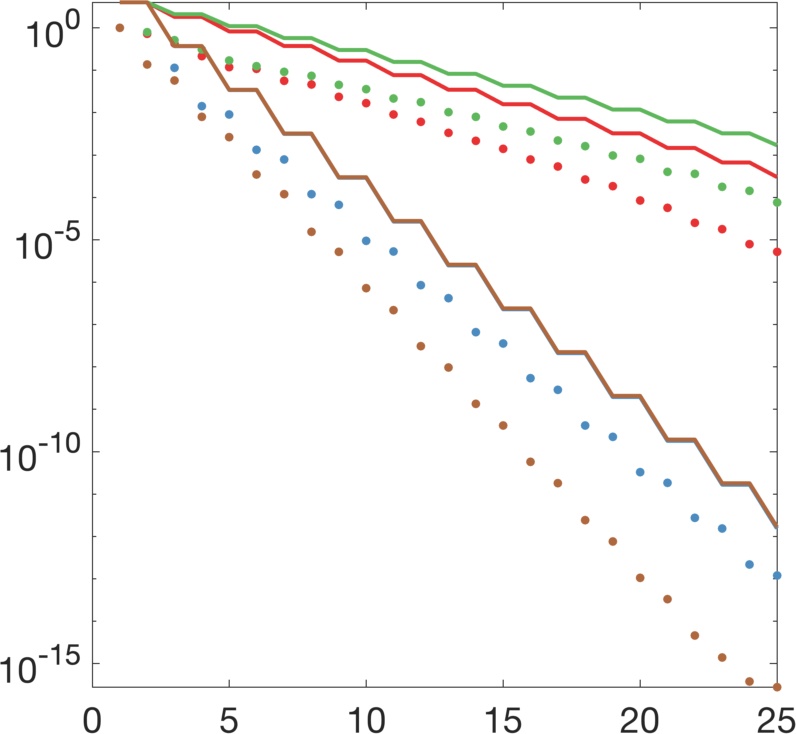}
  \put(45, -5){ \small index}
  \put(-7, 35){\rotatebox{90}{ \small magnitude}}
  \end{overpic}
  %\end{minipage}
  \caption{Left: Spectra \smash{$\Lambda(D_J)$} (blue dots) and \smash{$\Lambda(D_K)$} (red dots) in~\eqref{eq:dispY} for \smash{$J = \{ m , m\!+\!1, \ldots, n\!-\!1\}$, $K = \{0, 1, \ldots, m\!-\!1\}$} with $n = 64, m = 8,\sep = 1$. 
  The bounds of Theorem~\ref{thm:bounds} on \smash{$\erank(C_{JK})$} depend on the cross ratio of the end points of the arcs \smash{$\mathcal{A} _J\supset \Lambda(D_J)$} and \smash{$\mathcal{A}_K \supset \Lambda(D_K)$}.
 Right: Normalized singular values (dots) for various $(m, \sep)$ submatrices of $C$ satisfying~\eqref{eq:cauchylike-disp} for $n = 2048$, $\rho = 2$, and randomly chosen $B,C$. Singular value bounds (solid lines) according to~\eqref{eq:bound_zolo_arcs}. Red: $(m = 128, \sep = 1)$, green: $(m = 512, \sep = 1)$, blue: $(m = 128, \sep = 129)$, brown: $(m = 512, \sep = 513)$.}
  \label{fig:arcs}
  \end{figure}
  
  Figure~\ref{fig:arcs} illustrates the result of Theorem~\ref{thm:bounds} and its proof. The singular value bound~\eqref{eq:bound_zolo_arcs} matches the actual singular values fairly well. In both cases, when $\sep = 1$ and $\sep = m+1$, the singular values decay exponentially fast, but when $\sep = m+1$, the singular values decay at a much faster rate and, in turn, the $\epsilon$-rank grows more slowly as $\epsilon$ decreases.
  
\begin{remark}
In the theory of hierarchical matrices~\cite[Sec 5.2.2]{hackbusch2015hierarchical}, admissibility is often quantified by the relative gap 
\[
 \mathsf{gap}(E_1,E_2) = \frac{\mathsf{dist}(E_1,E_2)}{\min(\mathsf{diam}(E_1),\mathsf{diam}(E_2))}
\]
for disjoint $E_1,E_2 \subset \mathbb C$, where $\mathsf{diam}$ denotes the diameter of a set and $\mathsf{dist}$ denotes the
distance between two sets in $\mathbb C$. With the notation from the proof of Theorem~\ref{thm:bounds}, one has that
\[
% \mathsf{gap}( \mathcal{A}_J,\mathcal{A}_K) = \frac{\sin(\beta-\alpha)}{2\sin \alpha}, \quad
 g:=\mathsf{gap}( T(\mathcal{A}_J),T(\mathcal{A}_K)) = \frac12 \Big( \frac{1}{\kappa} - 1 \Big).
\]
Using~\eqref{eq:auxinequality}, it follows
that we get a (small) improvement of~\eqref{eq:bound_zolo_arcs} by expressing our findings in terms of the relative gap $g$, that is,~\eqref{eq:bound_zolo_arcs} remains true when replacing the fraction $(m+\sep-1)/\sep$ by the smaller quantity
$$
    \frac{1+g}{g} = \frac{1+\kappa}{1-\kappa} .
$$
\end{remark}

\subsubsection{Related results}

Most of the existing bounds in the literature on the numerical rank of Cauchy matrices are derived from the separable approximation of the Cauchy kernel~\cite{chandrasekaran2007superfast,Penzl2000,Tyrtyshnikov1996}: Suppose there exist $2k$ functions $f_j,g_j$ and $0 < \epsilon_k < 1$ such that
\begin{equation} \label{separablexp}
  \Big| \frac{1}{x-y} - \sum_{j = 1}^k f_j(x) g_j(y)  \Big| \le \epsilon_k \Big| \frac{1}{x-y} \Big|, \quad \forall x \in E_1, y \in E_2,
\end{equation}
where $E_1, E_2$ are either real disjoint intervals or disjoint circular arcs. This implies that there is a rank-$k$ matrix $C_k$ such that 
the inequality $|C-C_k| \le \epsilon_k |C|$ holds elementwise. For instance, Braess and Hackbusch~\cite{braess2009efficient} obtained~\eqref{separablexp} for real intervals through approximating the function $1/z$ by exponential sums. For a rational function $r\in \mathcal{R}_{k,k}$, it is not difficult
to verify that we may write
\[
 \frac{1}{x-y} \Big( \frac{r(x)}{r(y)}-1 \Big) = \sum_{j = 1}^k f_j(x) g_j(y)
\]
for suitable $f_j,g_j$. Hence,~\eqref{separablexp} holds provided that
\begin{equation} \label{zololight}
 \max_{x \in E_1} |r(x)| \big/ \min_{y \in E_2} |r(x)| = \epsilon_k.
\end{equation}
For real intervals $E_1,E_2$, this approach for a not necessarily optimal $r$ has been suggested by Tyrtyshnikov~\cite{Tyrtyshnikov1996}; see also related work by Penzl~\cite{Penzl2000},~\cite{Grasedyck2004}, and Oseledets~\cite{Oseledets2007}.
Dutt and Rokhlin~\cite[Sec. 2.2]{dutt1995fast} observed that an optimal \emph{polynomial} $r$ of degree at most $k$ in~\eqref{zololight} is given by the scaled Chebyshev polynomial $T_k$. They also suggest a change of variables in order to treat the case of circular arcs $E_1,E_2$. Lepilov and Xia~\cite[Theorem 1]{Lepilov2024} solve for circular arcs the extremal problem~\eqref{zololight} for the class $r(z) = (z-c)^k$ with optimized choice of $c \in \mathbb C$, which is related to the truncated Taylor expansion at $c$ of the Cauchy kernel. In terms of the angles $0 < \alpha < \beta < \pi$ defined in~\eqref{eq:tau_kappa}, the approximation rates $\epsilon_k$ in~\eqref{zololight} for circular arcs established in~\cite{dutt1995fast}, cited in~\cite{martinsson2005fast},
and~\cite{Lepilov2024} are given by
\[
 \epsilon^{\text{\cite{dutt1995fast}}}_k = 1 \Big/ T_k\Big( \frac{\tan(\beta/2)}{\tan(\alpha/2)} \Big) \quad \text{and} \quad 
 \epsilon^{\text{\cite{Lepilov2024}}}_k = \Big(\frac{\sin(\alpha/2)}{\sin(\beta/2)}  \Big)^k,
\]
respectively. In the strongly admissible case ($\sep \ge m+1$ or, equivalently, $\alpha \le \beta/3$), this leads to the upper bounds
$1/T_k(3)$ and $3^{-k}$, respectively, implying that the numerical rank depends logarithmically on $1/\epsilon$. However, neither~\cite{dutt1995fast,martinsson2005fast}
nor~\cite{Lepilov2024} provide effective approximation rates for the weakly admissible case $\sep = 1$, which is of major importance in popular hierarchical low-rank formats. Indeed, when $\sep = 1$, one gets $\epsilon^{\text{\cite{dutt1995fast}}}_{\lceil \sqrt{m} \rceil} \to 2 e^{-2}$ and $\epsilon^{\text{\cite{Lepilov2024}}}_m \to e^{-2}$, provided that $m,n \to \infty$ such that $m/n \to 0$.

\subsubsection{Poles and zeros of Zolotarev rational functions}
\label{sec:poleszeros}

A rational function \hbox{$r_k \in \mathcal{R}_{k,k}$} that attains the infimum for $Z_k$ in~\eqref{eq:zolotarev} is called a Zolotarev rational function. The poles and zeros of~$r_k$ turn out to be useful parameters in a variety of iterative methods for solving Sylvester matrix equations~\cite{benner2009adi,druskin2011analysis,li2002low, wachspress1962optimum}. The poles and zeros of the Zolotarev rational function for 
$Z_k(\mathcal{A}_J, \mathcal{A}_k)$
admit explicit expressions in terms of elliptic integrals and, as we will see below 
in Section~\ref{sec:HODLR}, they can be used to construct low-rank approximations to $C_{JK}$. 
To specify these expressions, we use a M\"obius transformation $T_1$ to map from 
$[-\delta, -1] \cup [1, \delta]$ to $\mathcal A_J \cup \mathcal A_K$, where 
$$\delta =-1 + 2 \gamma + 2 \sqrt{\gamma^2-\gamma} > 1,$$
and $\gamma$ is the modulus of the cross ratio given by~\eqref{eq:cr_arcsnew}; see also~\cite[Eq. (3.7)]{Beckermann2019}.
This matches the construction in the proof of \cref{thm:bounds}; in fact, $T_1$ coincides 
with the inverse of $T_0 \circ T$ for a suitable choice of $T_0$.
 \begin{lemma}
\label{lemma:Shift_parameters}
With the notation introduced above, the Zolotarev rational function $r_k$ for $Z_k(\mathcal{A}_J, \mathcal{A}_K)$ has  zeros $\tau_1, \cdots, \tau_k$  and  poles $\nu_1, \cdots, \nu_k$ given by
$$ \tau_j = T_1\left( -\delta \, {\rm dn}\left[ \frac{2j - 1}{2k} K(\Xi), \Xi \right] \right), \quad \nu_j = T_1\left( \delta \, {\rm dn}\left[ \frac{2j - 1}{2k} K(\Xi), \Xi \right] \right), $$
where $\Xi = \sqrt{1-1/\gamma^2}$, $K(\cdot)$ is the complete elliptic integral of the first kind and ${\rm dn}[\cdot, \cdot]$ is the Jacobi elliptic function of the third kind.
\end{lemma}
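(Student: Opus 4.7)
The plan is to invoke the Möbius invariance of the Zolotarev problem to reduce the extremal problem on the arcs $\mathcal{A}_J,\mathcal{A}_K$ to the classical Zolotarev problem on a pair of symmetric real intervals $[-\delta,-1]\cup[1,\delta]$, and then apply Zolotarev's explicit parametrization of the extremal rational function in terms of Jacobi elliptic functions.

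More precisely, composition with the Möbius map $T_1$ is a bijection of $\mathcal{R}_{k,k}$ onto itself and preserves the quotient in~\eqref{eq:zolotarev}. Hence, if $\tilde r_k$ attains $Z_k([-\delta,-1],[1,\delta])$, then $r_k = \tilde r_k \circ T_1^{-1}$ attains $Z_k(\mathcal{A}_J,\mathcal{A}_K)$, and the zeros and poles of $r_k$ are exactly the $T_1$-images of those of $\tilde r_k$. The problem therefore reduces to writing down the extremal $\tilde r_k$ on $[-\delta,-1]\cup[1,\delta]$.

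For this classical real-line problem, the symmetry $z\mapsto -z$ of the two intervals forces $\tilde r_k$ to have the form $\tilde r_k(z)=\prod_{j=1}^k (z-\tilde\tau_j)/(z+\tilde\tau_j)$ with zeros $\tilde\tau_j\in[-\delta,-1]$ and poles $-\tilde\tau_j\in[1,\delta]$. The equioscillation characterization of Zolotarev's third problem (see, e.g.,~\cite[Sec.~3]{Beckermann2019} and references therein) identifies these points via Jacobi elliptic functions as $\tilde\tau_j = -\delta\,\mathrm{dn}\!\left[\tfrac{2j-1}{2k}K(\Xi),\Xi\right]$, with modulus $\Xi$ determined by the geometry of the two intervals. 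Applying $T_1$ yields $\tau_j = T_1(\tilde\tau_j)$ and $\nu_j = T_1(-\tilde\tau_j) = T_1(\delta\,\mathrm{dn}[\,\cdot\,,\Xi])$, which are precisely the formulas asserted in the lemma.

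The only nontrivial piece of bookkeeping is verifying that the modulus in the elliptic functions is given by $\Xi = \sqrt{1-1/\gamma^2}$. This follows from direct computation: the defining relation $\delta=-1+2\gamma+2\sqrt{\gamma^2-\gamma}$ is equivalent to $(\delta+1)^2=4\gamma\delta$, which is precisely the cross-ratio identity for the four points $(-\delta,-1,1,\delta)$, and a standard manipulation then links this cross-ratio to the elliptic modulus used in Zolotarev's solution. Together with the orientation of $T_1$ (which sends $[-\delta,-1]$ to $\mathcal{A}_J$ and $[1,\delta]$ to $\mathcal{A}_K$, so that $|r_k|$ is small on $\mathcal{A}_J$ and large on $\mathcal{A}_K$ as required by~\eqref{eq:zolotarev}), this completes the derivation. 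The main obstacle in a formal write-up is not conceptual but notational: reconciling the three different parametrizations $(\gamma,\delta,\Xi)$ of the same geometric datum.
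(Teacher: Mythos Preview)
Your proposal is correct and follows essentially the same route as the paper: invoke the M\"obius invariance of the Zolotarev number to transport the problem from $\mathcal{A}_J\cup\mathcal{A}_K$ to $[-\delta,-1]\cup[1,\delta]$, then cite the classical explicit solution of Zolotarev's third problem on symmetric real intervals. The paper's own proof is a single sentence citing Akhiezer and Fortunato--Townsend for the interval case and the M\"obius invariance; you simply spell out more of the mechanics (the form $\tilde r_k\circ T_1^{-1}$, the symmetry of the zeros/poles, and the cross-ratio bookkeeping linking $\gamma$ and $\delta$).
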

\begin{proof}
This result immediately follows from the solution of Zolotarev's third problem on $[-\delta, -1] \cup [1, \delta]$
(see, e.g.,~\cite{Akhiezer1990} or~\cite[Theorem 2.1]{Fortunato2020}) and the invariance of $Z_k$ under M\"obius transformations.
\end{proof}
While a suitable $T_1$ is easy to derive and implement, some attention
is needed to 
accurately evaluate the elliptic functions in~\cref{lemma:Shift_parameters}
when the gap between $\mathcal{A}_J$ and $\mathcal{A}_K$ is small. We found that the standard \texttt{ellipk} and \texttt{ellipj} commands in MATLAB are insufficient and instead use an implementation found in the Schwarz-Christoffel Toolbox~\cite{driscoll1994schwarz}.

\section{HODLR approximation of Cauchy-like matrix $C$}
\label{sec:HODLR}

Theorem~\ref{thm:bounds} implies that the off-diagonal blocks of $C$ are of low numerical rank. 
Hierarchical low-rank formats~\cite{hackbusch2015hierarchical} are designed to take advantage of this property. In this section, we describe an algorithm that exploits the results from Section~\ref{sec:lowrankprop} 
to approximate $C$ with a matrix possessing the HODLR (hierarchical off-diagonal low rank) format~\cite{martinsson2011fast}. It is one of the most basic hierarchical structures and has the advantage of being relatively simple to describe and implement. %before turning to the HSS format in the next section. 
To further simplify the description, we will assume that $n$ is a power of two throughout the rest of the paper.

\begin{figure}
\centering
%\begin{minipage}{.30\textwidth}
\begin{tikzpicture}[scale=0.5, every node/.style={scale=0.8}]
%matrix tesselation
\draw[black, thick](0,0)--(8,0);
\draw[black, thick] (0,0)--(0,8);
\draw[black, thick] (8,0)--(8,8);
\draw[black, thick] (0,8)--(8,8);
\draw[black, thick](4,0)--(4,8);
\draw[black, thick](0,4)--(8,4);
\draw[black, thick](2, 4)--(2, 8); 
\draw[black, thick](0, 6)--(4, 6); 
\draw[black, thick](6, 0)--(6, 4); 
\draw[black, thick](4, 2)--(8,2);
\draw[black, thick](0, 7)--(2, 7); 
\draw[black, thick](1,6)--(1,8);
\draw[black, thick](2, 5)--(4,5); 
\draw[black, thick](3, 4)--(3, 6); 
\draw[black, thick](4, 3)--(6,3); 
\draw[black, thick](5,2)--(5, 4); 
\draw[black, thick](6,1)--(8,1); 
\draw[black, thick](7, 0)--(7, 2); 
%label off-diag blocks:
\node[] at (6,6) (a) {$C_1$};
\node[] at (2,2) (a) {$C_2$};
\node[] at (3,7) (a) {$C_3$};
\node[] at (1,5) (a) {$C_4$}; 
\node[] at (7,3) (a) {$C_5$};
\node[] at (5,1) (a) {$C_6$}; 
\node[] at (1.5, 7.5) (a) {$C_7$}; 
\node[] at (0.5, 6.5) (a) {$C_8$}; 
\node[] at (3.5, 5.5) (a) {$C_9$}; 
\node[] at (2.5, 4.5) (a) {$C_{10}$};
\node[] at (5.5, 3.5) (a) {$C_{11}$};
\node[] at (4.5, 2.5) (a) {$C_{12}$}; 
\node[] at (7.5, 1.5) (a) {$C_{13}$}; 
\node[] at (6.5, 0.5) (a) {$C_{14}$}; 
%fill diagonal blocks
\draw [pattern=north west lines, pattern color=blue] (0,7) rectangle (1,8);
\draw [pattern=north west lines, pattern color=blue] (1,6) rectangle (2,7);
\draw [pattern=north west lines, pattern color=blue] (2,5) rectangle (3,6);
\draw [pattern=north west lines, pattern color=blue] (3,4) rectangle (4,5);
\draw [pattern=north west lines, pattern color=blue] (4,3) rectangle (5,4);
\draw [pattern=north west lines, pattern color=blue] (5,2) rectangle (6,3);
\draw [pattern=north west lines, pattern color=blue] (6,1) rectangle (7,2);
\draw [pattern=north west lines, pattern color=blue] (7,0) rectangle (8,1);
\end{tikzpicture}
%\end{minipage}
%\begin{minipage}{.30\textwidth}
%build the tree
\begin{tikzpicture}[sibling distance=5.5pt,scale=0.9, every node/.style={scale=0.9}]
\tikzset{level 1/.style={level distance=35pt}}
\tikzset{level 2/.style={level distance=40pt}}
\tikzset{level 3+/.style={level distance=40pt}}
%\tikzset{edge from parent/.style=
%{draw,
%edge from parent path={(\tikzparentnode.south)
%-- +(0,-8pt)
%-| (\tikzchildnode)}}}
\hspace{.5 cm}  \Tree [ .\node[]{$0$ };   
										[ .\node[]{$1$ }; 
											[ .\node[]{$3$ };
												[.\node[]{$7$};]
												[.\node[]{$8$};]
											]
											[ .\node[]{$4$};
												[.\node[]{$9$};]
												[.\node[]{$10$};]
											]
										] 
									    [ .\node[]{$2$}; 
									    	[ .\node[]{$5$};
									    		[.\node[]{$11$};]
												[.\node[]{$12$};]
									    	]
											[ .\node[]{$6$};
												[.\node[]{$13$};]
												[.\node[]{$14$};]
											]
										] 
									]

\end{tikzpicture}
%\end{minipage}
%\begin{minipage}{.10\textwidth}
\begin{tikzpicture}[scale=0.9, every node/.style={scale=0.9}]
\hspace{1cm}\node[] at (3,8.5) {(level $0$)};
\node[] at (3,7.3) {(level $1$)};
\node[] at (3,6) {(level $2$)};
\node[] at (3,4.6) {(level $3$)};
\end{tikzpicture}
%\end{minipage}
\caption{A HODLR partitioning of $C$ is found by tessellating $C$ according to a binary tree \smash{$\mathcal{T}$} (right) of depth $3$.   Every labeled block corresponds to a vertex on the tree, with the root vertex (level $\ell = 0$) corresponding to $C$ as a whole. Each labeled block is (approximately) represented in factorized low rank form. On the lowest level, the diagonal blocks (filled blue) are stored explicitly.}
\label{fig:HODLRandTree}
\end{figure}

A matrix \smash{$\widetilde{C} \in \mathbb{C}^{n \times n}$} is called a HODLR matrix if it can be partitioned into equal-sized blocks
\begin{equation} 
\label{eq:HODLR}
\widetilde{C} = \begin{bmatrix} \widetilde{C}_{11} & \widetilde{C}_{12} \\ \widetilde{C}_{21} & \widetilde{C}_{22} \end{bmatrix}, 
\end{equation} 
such that \smash{$\widetilde{C}_{12}$, $\widetilde{C}_{21}$} are low-rank matrices represented in factorized form, and \smash{$\widetilde{C}_{11}$, $\widetilde{C}_{22}$} again admit a partitioning of the form~\eqref{eq:HODLR}. This partitioning is continued recursively until a prescribed minimum block size $\nmin$ is reached.
The definition~\eqref{eq:HODLR} induces a recursive block structure on an arbitrary $n\times n$ matrix, which will be called HODLR partitioning; see Figure~\ref{fig:HODLRandTree}. The maximum rank of any off-diagonal block appearing in the recursive construction is called the HODLR rank of $\widetilde{C}$.

It is convenient to associate a HODLR partitioning with a perfectly balanced binary tree \smash{$\mathcal{T}$}. We number the vertices of the tree consecutively, level by level from top to bottom, as shown in Figure~\ref{fig:HODLRandTree}, so that level $\ell$  contains the vertices \smash{$2^\ell\! -\! 1, \ldots, 2^{\ell+1}\! - \!2$}. Each vertex $v$ of \smash{$\mathcal{T}$} then has children $2v+1$ and $2v+2$. As shown in Figure~\ref{fig:HODLRandTree}, each $v$ is associated with an off-diagonal block \smash{${C}_v$} from the HODLR partition of ${C}$.  This block is defined in terms of its row indices as follows:
Set \smash{$J_0 = \{ 0, \ldots, n\!-\!1\}$}. For each parent vertex $v$ with associated index set \smash{$J_v$} of length $m$, the index sets of the children $2v+1$ and $2v+2$ are defined recursively as \smash{$J_{2v\!+\!1} = \{ (J_v)_1, \ldots (J_v)_{m/2}\}$} and $J_{2v\!+\!2} = J_v \setminus J_{2v\!+\!1}$. At a vertex $v$, the corresponding off-diagonal block is then given by ${C}_{v} = {C}(J_v, J_{\widetilde{v}})$, where \smash{$\widetilde{v}$} is the sibling vertex of $v$ in $\mathcal{T}$ (that is, $\widetilde v = v \!-\!1$ if $v$ is even and $\widetilde v = v\!+\!1$ if $v$ is odd).  Note that $J_v \cap J_{\widetilde{v}} = \emptyset$ by construction. When $C$ is approximated by a HODLR matrix each \emph{off-diagonal HODLR block} ${C}_{v}$ is approximated by a low-rank matrix and stored in factored form.

\subsection{Displacement-based approximation of off-diagonal blocks}
\label{sec:HODLR_factorization}

\begin{figure} 
 \centering
  \begin{overpic}[width=.44\textwidth]{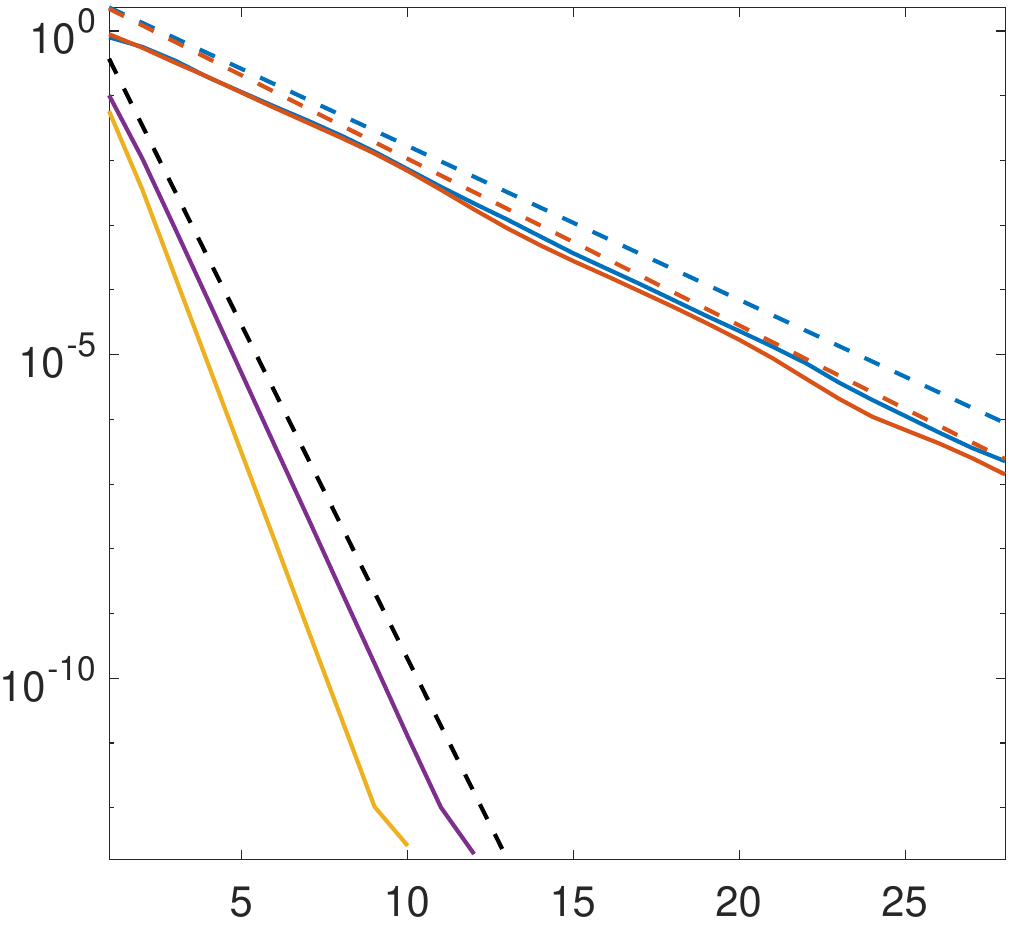} 
  \put(46,-4) {iterations}
  \put(-5, 40){\rotatebox{90}{ rel. error}}
  \put(48, 76){\rotatebox{-25}{$\sep = 1$}}
    \put(25, 62){\rotatebox{-62}{$\sep = m\!+\!1$}}
  \end{overpic}
  \caption{The error $\|C_v-C_v^{(k)}\|_2/\|C_v\|_2$, where $C_v^{(k)}$ is constructed with $k$ iterations of fADI, is plotted (solid lines) on a logarithmic scale against $k$ for various choices of submatrices $C_v$. Here, $C$ is a $4096 \times 4096$ Cauchy-like matrix. Dotted lines show the error bounds via \Cref{corr:ADIbounds}. The submatrices $C_v = C(1\!:\!2048 \, , \, 2049\!:\!4096)$ (blue) and $C_v = C(1\!:\!1024 \, , \, 1025\!:\!4096)$ (red) are $(m, \sep = 1)$ submatrices. In contrast,  matrices $C_v = C(1\!:\!1024 \, , \, 2049\!:\!3072)$ (yellow) and $C_v = C(1025\!:\!3584 \, , \, 1\!:\!512)$ (purple) are submatrices of type $(m, \sep = m\!+\!1)$. The relative error bound is the same for these latter two submatrices.}
  \label{fig:fadi_perf}
  \end{figure}

We now turn to a practical algorithm for constructing low rank approximations to off-diagonal HODLR blocks of $C$, by combining the construction of the rational approximation from Section~\ref{sec:poleszeros} with ADI.

By~\eqref{eq:dispY}, every off-diagonal HODLR block \smash{$X = C_v$} of $C$ satisfies a Sylvester matrix equation with diagonal coefficients:
\begin{equation} 
\label{eq:displacement_block}
D_JX - XD_K = \widetilde{G}_J\widetilde{H}_K^*, \quad J = J_v, \quad K = J_{\widetilde v}.
\end{equation} 
ADI is an iterative method that solves for $X$ by alternately updating the column and row spaces of an approximate solution~\cite{lu1991solution, peaceman1955numerical}.
The factored version of ADI (termed fADI)~\cite{benner2009adi,li2002low} is a mathematically equivalent reformulation of ADI that returns the approximation in factored form. After $k$ iterations, it produces factors $Z,W$ such that $X \approx ZW^*$. As $Z$ and $W$ each have $\rho k$ columns, it follows that the rank of $ZW^*$ is at most $\rho k$. 
 
\cref{alg:fadicauchy} provides the pseudocode of fADI applied to~\eqref{eq:displacement_block}. Note that each 
iteration of fADI is extremely cheap because $D_J$ and $D_K$ are diagonal. Assuming both diagonal matrices are $m\times m$, only $\mathcal{O}(\rho m)$ operations are needed per iteration.

The convergence speed of fADI critically depends on a suitable choice of \textit{shift parameters} $\{\tau_{j}, \nu_{j}\}_{j = 1}^k$ in~\cref{alg:fadicauchy}. As we will see in the next section, the results from Section~\ref{sec:poleszeros} make it possible to choose nearly optimal shift parameters so that 
$\mathcal{O}( \log m \log \epsilon^{-1} )$ iterations and, thus, $\mathcal{O}( \rho m \log m \log \epsilon^{-1} )$ operations suffice to construct a low rank approximation $ZW^*$ such that \hbox{$\|X - ZW^*\|_2/\|X\|_2 \leq \epsilon$} holds.

\subsubsection{Choice of shift parameters}
\begin{algorithm}[t!]
  \caption{fADI for the low rank approximation of a Cauchy-like matrix} \label{alg:fadicauchy}
  \begin{algorithmic}[1]
    \Procedure{\rm fADI}{$ D_{J}, D_{K}, \widetilde{G}_J, \widetilde{H}_K, \{\tau_j, \nu_j\}_{j = 1}^k$} 
\State $Z_1 = (\nu_1-\tau_1) (D_J - \nu_1 I)^{-1}\widetilde{G}_J$
\State$W_1 = (D_K^*-\overline{\tau}_1 I)^{-1}\widetilde{H}_K$
    \For{$ j = 1$ to $k\!-\!1$}
	\State $Z_{j +1} = (\nu_{j+1}-\tau_{j+1}) (D_J - \tau_jI) (D_J-\nu_{j+1} I)^{-1} Z_j$
	\State $W_{j+1} = (D_K^* - \overline{\nu}_j I) (D_{K}^* - \overline{\tau}_{j+1} I)^{-1} W_j$
    \EndFor
\State $Z = \left[ Z_1, Z_2, \cdots, Z_k \right]$
\State $W = \left[ W_1, W_2, \cdots, W_k \right]$
   \EndProcedure
\State {\bf Output:} $Z,W \in \mathbb{C}^{m \times \rho k}$ such that $X \approx ZW^*$, where $X$ is as in~\eqref{eq:displacement_block}. 
 \end{algorithmic}
\end{algorithm}
Letting \smash{$X^{(k)} = ZW^*$} denote the fADI-generated solution after $k$ iterations, the shift parameters should be chosen to minimize the error \smash{$\|X - X^{(k)}\|_2$}. This optimization problem has been studied extensively~\cite{Beckermann2019, lebedev1977zolotarev, lu1991solution, sabino2007solution}, and it is known that 
\begin{equation} 
\label{eq:ADIerror}
X - X^{(k)} = r_k(D_J) X r_k(D_K)^{-1},  \quad r_k(z) = \prod_{j = 1}^k \dfrac{z - \tau_j}{z-\nu_j}.
\end{equation} 
When choosing $r_k$ as the Zolotarev rational function for 
$Z_k(\mathcal{A}_J, \mathcal{A}_K)$ with the (smallest) arcs $\mathcal{A}_J, \mathcal{A}_K$ such that
$\Lambda(D_J) \subset \mathcal A_J $, $\Lambda(D_K) \subset \mathcal A_K$, it follows that
\begin{equation} \label{eq:bnd_errfadi}
\|X-X^{(k)}\|_2 \leq Z_k(\mathcal{A}_J, \mathcal{A}_K) \|X\|_2.
\end{equation}
Upper bounds on $Z_k(\mathcal{A}_J, \mathcal{A}_K)$ thus bound the fADI error after $k$ iterations. 

\begin{corollary}
\label{corr:ADIbounds}
Let $X$ be an $m\times m$ off-diagonal HODLR block of $C$, satisfying~\eqref{eq:displacement_block} for a right-hand side of rank at most $\rho$.
Consider the matrix \smash{$X^{(k)} = ZW^*$} constructed after $k$ iterations of fADI with the 
shift parameters \smash{$\{(\tau_j, \nu_j)\}_{j = 1}^k$} from \cref{lemma:Shift_parameters}.
Then $X^{(k)}$ has rank at most $\rho k$ and
$$ \|X-X^{(k)}\|_2 \leq 4 \xi^{-k} \|X\|_2,$$
with $0 < \xi <1$ defined in~\eqref{eq:bound_zolo_arcs} for $\sep = 1$.
\end{corollary}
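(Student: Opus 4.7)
The plan is to combine the standard fADI error identity with the Zolotarev bound derived in the proof of Theorem~\ref{thm:bounds}, after verifying that off-diagonal HODLR blocks fit the $(m,\sep)$ framework with $\sep=1$.

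First, I would dispatch the rank claim directly from Algorithm~\ref{alg:fadicauchy}. Since $\widetilde G_J$ and $\widetilde H_K$ each have at most $\rho$ columns and the updates in lines 5--6 apply only scalar multiples of diagonal matrices to the previous iterate, every $Z_j$ and $W_j$ has at most $\rho$ columns. The concatenated factors $Z$ and $W$ therefore have at most $\rho k$ columns, so $X^{(k)} = ZW^*$ has rank at most $\rho k$.

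For the error bound I would invoke the ADI error identity~\eqref{eq:ADIerror}, which gives $X - X^{(k)} = r_k(D_J)\,X\,r_k(D_K)^{-1}$ with $r_k(z) = \prod_{j=1}^k (z-\tau_j)/(z-\nu_j)$. Since $D_J$ and $D_K$ are diagonal, hence normal, with spectra contained in the arcs $\mathcal{A}_J$ and $\mathcal{A}_K$ introduced in the proof of Theorem~\ref{thm:bounds}, submultiplicativity of the spectral norm yields
\begin{equation*}
\|X - X^{(k)}\|_2 \leq \frac{\sup_{z \in \mathcal{A}_J}|r_k(z)|}{\inf_{z \in \mathcal{A}_K}|r_k(z)|}\,\|X\|_2.
\end{equation*}
By Lemma~\ref{lemma:Shift_parameters}, the chosen shifts $\{\tau_j,\nu_j\}$ are precisely the zeros and poles of the extremal rational function realizing the infimum in~\eqref{eq:zolotarev} for $Z_k(\mathcal{A}_J, \mathcal{A}_K)$, so the right-hand side equals $Z_k(\mathcal{A}_J, \mathcal{A}_K)\,\|X\|_2$, reproducing~\eqref{eq:bnd_errfadi}.

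It then remains to check that every off-diagonal HODLR block $C_v$ is an $(m,1)$ submatrix in the sense of Definition~\ref{def:mlsubmatrix}, so that~\eqref{eq:bound_zolo_arcs} with $\sep=1$ applies. The sibling index sets $J_v$ and $J_{\widetilde v}$ are contiguous integer intervals of length $m$ that partition a common parent interval, so the max-spread of each is $m-1$; the two sets are disjoint and share only a boundary, giving $\min\{|j-k|:j\in J,\,k\in K\}\geq 1$; and the periodicity condition $\min\{|j-k-n|:j\in J,\,k\in K\}\geq 1$ holds trivially since $j\leq n-1$ and $k\geq 0$ imply $k+n-j\geq 1$. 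The hypothesis $n\geq 2(m+\sep-1)=2m$ of Theorem~\ref{thm:bounds} is also satisfied, as $m\leq n/2$ for every off-diagonal HODLR block. Consequently $Z_k(\mathcal{A}_J, \mathcal{A}_K)\leq 4\xi^{-k}$ by~\eqref{eq:bound_zolo_arcs}, and combining with the previous display yields the claim. The only nontrivial aspect of the argument is this admissibility check; the rest is a direct specialization of results already established.
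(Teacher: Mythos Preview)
Your proposal is correct and follows essentially the same approach as the paper, which simply states that the result follows from combining~\eqref{eq:bnd_errfadi} with~\eqref{eq:bound_zolo_arcs}. You have merely unpacked these references by explicitly deriving~\eqref{eq:bnd_errfadi} from the error identity~\eqref{eq:ADIerror} together with the optimality in Lemma~\ref{lemma:Shift_parameters}, and by verifying the $(m,1)$ admissibility needed to invoke~\eqref{eq:bound_zolo_arcs}; this added detail is sound and does not deviate from the paper's argument.
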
 
\begin{proof}
The result follows directly from combining~\eqref{eq:bnd_errfadi} with the bound~\eqref{eq:bound_zolo_arcs}. 
\end{proof} 

For each off-diagonal block $C_v$ with $v\in \mathcal T$, we can use \cref{corr:ADIbounds} to adaptively construct a low-rank  approximation $\widetilde{C}_v$  that satisfies a given error tolerance \smash{$\epsilon_v$}. Letting $m$ denote the number of rows in  $C_v$, we apply
\begin{equation} \label{eq:kv}
k_v =\big\lceil 2\pi^{-2} \log( 4 m ) \log\big( 4 \epsilon^{-1}_v \big) \big\rceil,
\end{equation}
 steps of fADI, using the shift parameters from \cref{lemma:Shift_parameters}, to construct this approximation. \Cref{fig:fadi_perf} illustrates the performance of fADI in practice using various submatrices of a $4096 \times 4096$ Cauchy-like matrix associated with a Toeplitz matrix with entries randomly selected from a uniform distribution over $[0, 1]$; there is a clear difference between cases where $\sep = 1$ and $\sep = m +1$.
 
\subsection{Constructing HODLR approximations to $C$}

By applying fADI to each individual off-diagonal HODLR block $C_v$
and computing the diagonal part of $C$ separately as explained in Section~\ref{sec:displacement}, one obtains 
a HODLR approximation $\widetilde{C} \approx C$.
\begin{corollary} 
Given $\epsilon > 0$, the construction described above requires 
\begin{equation} \label{eq:complexity}
 \mathcal O\big( \rho n \log^2\!n \cdot (\log \log n + |\log \epsilon| ) \big)
\end{equation}
operations to produce a HODLR matrix $\widetilde{C}$ 
such that $\| C- \widetilde{C}\|_2 \le
\epsilon \| C\|_2$, provided that the minimum block size of the HODLR partitioning remains bounded. The HODLR rank of 
$\widetilde{C}$ is bounded by $k\rho$ with $k = \mathcal O\big( \log n \cdot (\log \log n + |\log \epsilon| ) \big)$.

\end{corollary}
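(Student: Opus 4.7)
The plan is to decompose $\|C - \widetilde{C}\|_2$ into contributions from each level of the HODLR tree $\mathcal T$, choose the per-block tolerances $\epsilon_v$ so that these contributions sum to at most $\epsilon\|C\|_2$, invoke \cref{corr:ADIbounds} to bound $k_v$ (and hence the rank of each low-rank factor), and finally add up the fADI work level by level.

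First I would write
\[
  C - \widetilde{C} \;=\; \sum_{\ell = 1}^{L} E_\ell, \qquad L = \log_2(n/\nmin),
\]
where $E_\ell$ is the $n \times n$ matrix that is zero outside the off-diagonal HODLR blocks $\{C_v : v \text{ at level } \ell\}$ of $C$ and equals $C_v - \widetilde{C}_v$ on each such block; the finest-level diagonal blocks are stored exactly and contribute nothing. The key geometric observation is that the $2^\ell$ off-diagonal HODLR blocks at level $\ell$ have pairwise disjoint row index sets and pairwise disjoint column index sets, because the $J_v$ at level $\ell$ partition $\{0,\ldots,n-1\}$. A symmetric row/column permutation therefore turns $E_\ell$ into a block-diagonal matrix, giving
\[
  \|E_\ell\|_2 \;=\; \max_{v \text{ at level } \ell}\|C_v-\widetilde{C}_v\|_2.
\]

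Next I would fix the uniform local tolerance $\epsilon_v := \epsilon/L$. Because $C_v$ is a submatrix of $C$, $\|C_v\|_2 \le \|C\|_2$, so \cref{corr:ADIbounds} guarantees $\|C_v-\widetilde{C}_v\|_2 \le \epsilon_v \|C\|_2$. The triangle inequality then yields $\|C-\widetilde{C}\|_2 \le L\cdot(\epsilon/L)\|C\|_2 = \epsilon\|C\|_2$ as required. For a block at level $\ell$ with $m_v = n/2^\ell \le n$, \eqref{eq:kv} gives
\[
  k_v \;=\; \left\lceil \tfrac{2}{\pi^2}\log(4 m_v)\log(4L/\epsilon)\right\rceil \;=\; \mathcal{O}\bigl(\log n \cdot (\log\log n + |\log\epsilon|)\bigr),
\]
uniformly in $v$, so every $\widetilde{C}_v$ has rank at most $\rho k_v \le \rho k$, which is the claimed HODLR-rank bound.

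For the operation count, one fADI step on an $m_v\times m_v$ block costs $\mathcal O(\rho m_v)$ since $D_J$ and $D_K$ are diagonal, so producing $\widetilde{C}_v$ requires $\mathcal O(\rho m_v k)$ work; summing over the $2^\ell$ blocks at level $\ell$ gives $\mathcal O(\rho n k)$ per level, and summing over $L = \mathcal O(\log n)$ levels gives the advertised $\mathcal O(\rho n \log^2 n \cdot (\log\log n + |\log\epsilon|))$. The initial FFTs for $\widetilde G,\widetilde H$ and for the diagonal of $C$, as well as forming the finest-level diagonal blocks explicitly from generators (constant work per block under the $\nmin = \mathcal O(1)$ hypothesis), contribute only lower-order $\mathcal O(\rho n \log n)$ terms that are absorbed. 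The only genuinely non-routine step is the per-level spectral-norm identity above: once the disjointness of the rows and columns of the level-$\ell$ off-diagonal blocks is used to reduce $E_\ell$ to a block-diagonal matrix, the rest of the argument is pure bookkeeping.
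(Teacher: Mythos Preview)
Your proof is correct and follows essentially the same route as the paper: choose a uniform per-block tolerance $\epsilon_v = \epsilon/L$, use the HODLR block structure to bound the level-$\ell$ error in spectral norm by the maximum block error, sum over the $\mathcal O(\log n)$ levels, and then tally the fADI cost level by level. Your argument is in fact more explicit than the paper's at the one nontrivial point---the paper simply asserts that ``because of the structure of the HODLR matrix'' the level-$\ell$ error norm is bounded by the per-block bound, whereas you spell out that the level-$\ell$ off-diagonal blocks have pairwise disjoint row and column index sets and hence $E_\ell$ is block-diagonal up to permutations.
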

\begin{proof} The choice~\eqref{eq:kv} of $k_v$ ensures that $\|C_v - \widetilde{C}_v\|_2 \le \epsilon_v \|C_v\|_2 \le \epsilon_v \|C\|_2$. Set 
$
\epsilon_{v} = \epsilon / \log_2 n
$
for every $v \in \mathcal T$.
Because of the structure of the HODLR matrix, the $2$-norm of all errors committed on any fixed level $\ell$ remains bounded by $\epsilon \|C\|_2 / \log_2 n $. As there are at most $\log_2 n$ levels, the claimed error bound 
$\| C- \widetilde{C}\|_2 \le
\epsilon \| C\|_2$ follows from the triangle inequality.

For each of the $2^{\ell+1}$ off-diagonal HODLR blocks on level $\ell \ge 1$, fADI requires $\mathcal O(\rho n/2^\ell k_v)$ operations with $k_v = \mathcal O( \log(n/2^\ell) \log(\log n / \epsilon) )$ by~\eqref{eq:kv}. Summing up over all blocks and over all $\mathcal O(\log n)$ levels, this implies~\eqref{eq:complexity},
taking into account that the cost for recovering the diagonal part of $C$ and assembling the remaining entries of the (small) diagonal blocks in the HODLR partitioning is negligible.
The claimed bound on the HODLR rank follows directly by setting $k = \max k_{v}$.
\end{proof}

Given $\widetilde{C} \approx C$, the corresponding approximation $\widetilde T \approx T$ of the 
Toeplitz-like matrix $T$ is given by $\widetilde T = F^* \widetilde{C} F$.
Because $F$ is unitary, this approximation also satisfies $\| T- \widetilde{T}\|_2 \le
\epsilon \| T\|_2$.
Note that $\widetilde T$ is never formed explicitly but only used implicitly, as in Algorithm~\ref{alg:generalsolver}.

\subsubsection{A faster HODLR approximation}

It turns out that the structure of $C$ can be exploited further such that only one fADI computation per tree level is required (related to ideas mentioned in~\cite[Sec. 4]{xia2012superfast} and~\cite{chandrasekaran2007superfast}). For this purpose, we introduce the ``base" matrix $\mathscr{C}$
with entries 
\[
\mathscr{C}_{jk} = \frac{\omega^j \omega^k}{\omega^{2j} - \omega^{2k}} = \frac{1}{2\mathrm{i} \sin( \pi (j-k)/n )}, \quad j \neq k,  
\]
and $\mathscr{C}_{jj} = 0$, where $0 \leq j, k \leq n\!-\!1$. 
Note that $\mathscr{C}$ satisfies a displacement equation of the form~\eqref{eq:cauchylike-disp}:
\begin{equation} \label{eq:cauchydisp}
  D \mathscr{C} - \mathscr{C} D = f f^{\top}- {\rm diag}(f^2), \quad f = \big[ 1, \omega^1, \ldots, \omega^{n-1} \big]^{\top}. 
\end{equation}
In turn, \cref{thm:bounds}, \cref{lemma:Shift_parameters}, and \cref{corr:ADIbounds} can all be applied to the off-diagonal HODLR blocks of $\mathscr{C}$. However, it is easy to check that $\mathscr{C}$ is also a Hermitian Toeplitz matrix.
This implies that the $2^{\ell+1}$ off-diagonal HODLR blocks on a fixed level $\ell$ are nearly identical: Considering the first off-diagonal block $\mathscr{C}_v$ on level $\ell$, the other level-$\ell$ blocks in the upper and lower triangular part are identical to $\mathscr{C}_v$ and ${\mathscr{C}_v^*}$, respectively. Hence, it suffices
to construct a low-rank approximation for the block $\mathscr{C}_{v}$ and use this approximation (or its Hermitian transpose) for all other blocks on the same level. This reduces the $\log^2\!n$ factor to $\log n$ in the complexity estimate~\eqref{eq:complexity}.

To relate $\mathscr{C}$ to the Cauchy-like matrix $C$, let us adjust the right-hand side of the displacement equation~\eqref{eq:cauchylike-disp} by setting $\widehat G := \mathrm{diag}(f)^{-1} \widetilde G$,
$\widehat H^* := \widetilde{H}^* \mathrm{diag}(f)^{-1}$. It follows that the off-diagonal parts of $C$
and $\mathscr{C} \circ \widehat{G}\widehat{H}^*$ are identical. In particular, arbitrary off-diagonal blocks $X = C_{JK}$, 
$\mathscr{X} = \mathscr{C}_{JK}$ satisfy the relation
\begin{equation} 
\label{eq:hadamard_off}
X = \mathscr{X} \circ \widehat{G}(J,:) \widehat{H}(K,:)^* = \sum_{j = 0}^{\rho-1} {\rm diag}(\widehat G(J,  j))\, \mathscr{X} \,{\rm diag}(\widehat H(K , j))^*,
\end{equation}
where we used Matlab notation to indicate submatrices of $\widehat{G}$, $\widehat{H}$. Using the fact that diagonal matrices permute, one easily verifies (see, e.g.,~\eqref{eq:ADIerror}) that the $k$th iterates $X^{(k)}$ and $\mathscr{X}^{(k)}$ of fADI for $X$ and $\mathscr{X}$, respectively, are related through
\begin{equation} \label{eq:hadamard_adi}
 X^{(k)} = \mathscr{X}^{(k)} \circ \widehat{G}(J,:) \widehat{H}(K,:)^* = \sum_{j = 0}^{\rho-1} {\rm diag}(\widehat G(J,  j))\, \mathscr{X}^{(k)} \,{\rm diag}(\widehat H(K , j))^*.
\end{equation}
This shows that the error estimate of Corollary~\ref{corr:ADIbounds} is maintained.

Because the computations of $\widehat G, \widehat H$, and the diagonal do not add significantly to the cost, we still gain the complexity factor $\log n$ mentioned above, both in terms of memory and cost.
This assumes that \smash{$X^{(k)}$} is represented in the factored form~\eqref{eq:hadamard_adi}, with the low-rank matrix \smash{$\mathscr{X}^{(k)}$ }obtained by carrying out one fADI computation per level.
This implicit representation is well suited when, e.g., carrying out matrix-vector products with the corresponding approximations \smash{$\widetilde C$} and \smash{$\widetilde T$}. 
If each low-rank off-diagonal HODLR block \smash{$X^{(k)}$} is actually evaluated, by multiplying the factors of \smash{$\mathscr{X}^{(k)}$ }with the diagonal matrices, this complexity advantage is lost. However, one can still expect this to perform
better than applying ADI separately for every off-diagonal block.

\section{HSS approximation of Cauchy-like matrix $C$} 
\label{sec:HSSintro}

While the HODLR format has the advantage of being relatively simple to implement, significant savings in Steps 2 and 3 of Algorithm~\ref{alg:generalsolver} are possible if  more complicated hierarchical low-rank formats, such as the HSS format, are used. 

Compared to HODLR, HSS features additional recursive relations in the off-diagonal low-rank factors across partition levels~\cite{bebendorf2008hierarchical,hackbusch2015hierarchical, martinsson2011fast}. 
Specifically, let \smash{$\widetilde{C}$}  be a HODLR matrix, and let every off-diagonal HODLR block \smash{$\widetilde{C}_v$}
be expressed as \smash{$\widetilde{C}_v = U_v B_{v} V_{\tilde{v}^*}$}, where \smash{$\tilde{v}$} is the sibling vertex of $v$ in the tree $\mathcal T$ determining the HODLR partitioning. For simplicity, we assume that each such block is of equal rank $p$ and, hence, \smash{$B_{v,\tilde v}$} is $p \times p$. 
For an HSS matrix \smash{$\widetilde{C}$}, the left factor \smash{$U_v$} for a parent vertex $v$ admits the expression
\begin{equation} 
\label{eq:HSS_facteveryorU}
U_v = \begin{bmatrix} U_{2v+1} & 0 \\ 0 & U_{2v+2} \end{bmatrix} R_v, \quad R_v \in \mathbb{C}^{2p \times p}.
\end{equation}
In other words, the columns of \smash{$U_v$} are contained in the subspace spanned by the direct sum of the left factors at the children vertices.
The transfer of information from the children to the parent vertex is encoded in the transfer matrix \smash{$R_v$}. If the sibling \smash{$\tilde v$} is a parent vertex, the right factor \smash{$V_{\tilde v}$} also satisfies a similar relation:
\begin{equation} 
\label{eq:HSS_factorV}
V_{\tilde{v}} = \begin{bmatrix} V_{2\tilde v+1} & 0 \\ 0 & V_{2\tilde v+2} \end{bmatrix} W_{\tilde{v}}, \quad W_{\tilde{v}} \in \mathbb{C}^{2p \times p}.
\end{equation}
Expanding the recursions~\eqref{eq:HSS_facteveryorU}--\eqref{eq:HSS_factorV},
the factorization \smash{$\widetilde C_1 = U_1B_{1,2} V_2^* $} for the example from Figure~\ref{fig:HODLRandTree} takes the form
\[
 \underbrace{\begin{bmatrix} 
U_{7} & & & \\
& U_{8} & & \\
& & U_{9} & \\
& & & U_{10}
\end{bmatrix}}_{4 \nmin \times 4 p} \underbrace{\begin{bmatrix} R_3 & \\ & R_4\end{bmatrix}}_{4p \times 2p} \underbrace{R_1}_{2p\times p} \underbrace{B_{1,2}}_{p \times p} W_2^* \begin{bmatrix} W_5^* & \\ & W_6^*\end{bmatrix} \begin{bmatrix} 
V_{11}^* & & & \\
& V_{12}^* & & \\
& & V_{13}^* & \\
& & & V_{14}^*
\end{bmatrix},
\]
where $\nmin$ denotes the minimum block size.

In general, the recursion explained above allows one to store only the small matrices \smash{$R_v$}, \smash{$W_v$}, and \smash{$B_{v,\tilde v}$} for every parent vertex $v\in \mathcal T$. On the lowest level \smash{$\widehat{\ell}$} of the tree $\mathcal T$,
one explicitly stores the $\nmin \times \nmin$ diagonal blocks as well as \smash{$U_v, B_{v,\tilde v},$ and $V_v$}  for each of the \smash{$2^{\widehat{\ell}}$} leaf vertices $v$. We refer to the representation in terms of these small matrices as the \textit{HSS format} of \smash{$\widetilde{C}$}. Storing \smash{$\widetilde{C}$} in the HSS format requires only $\mathcal{O}(n p)$ memory, which is about a factor $\log_2 n$ smaller compared to the HODLR format with HODLR rank $p$.

\subsection{Superfast HSS-based solvers}

\label{sec:advantagesHSS} 

The HSS format not only reduces the cost for storing but also the cost for operating with a matrix $\widetilde{C}$. For example, a matrix-vector product \smash{$\widetilde{C}y$} can be computed in $\mathcal{O}(n p)$ operations~\cite[Alg. 1]{martinsson2011fast}.
Crucially, the HSS format also makes the solution of a linear system \smash{$\widetilde{C}\widetilde{x} = \widetilde{b}$} especially efficient to compute. In the following, we only sketch how such fast solvers work and refer to~\cite{chandrasekaran2006fast,xia2012superfast} for more details. After computing a so-called ULV factorization~\cite{chandrasekaran2006fast} -- a recursive unitary transformation to triangular form that takes advantage of the nested hierarchical structure -- a fast merge-and-backward-substitution routine, followed by a forward-substitution scheme, can be used to solve the linear system. This ULV factorization and the subsequent solution only require \smash{$\mathcal{O}(p^2n)$} and \smash{$\mathcal{O}(p n)$} operations, respectively.  In~\cite{xia2012superfast},  a related ULV-like solver is introduced that relaxes the requirement of  using unitary transformations, saving about a factor of 2 over the standard ULV solver~\cite[Sec.~3.6]{xia2012superfast}. For this approach to work, every \smash{$U_v, V_v, R_v$,} and \smash{$W_v$} in the HSS format need to originate from interpolative decompositions.\footnote{In our experiments in \cref{sec:practicalsolver}, we use the interpolative decompositions described in Section~\ref{sec:int_decomp}, but then orthogonalize and apply the standard ULV solver.}

Our focus in this work is not the reproduction of the HSS solvers already developed in~\cite{chandrasekaran2007superfast, martinsson2011fast, xia2012superfast}. Instead, we develop an fADI-based interpolative decomposition for appropriate HSS blocks. This HSS approximation is cheap and deterministic, and it allows for any HSS solver to be used, even the specialized one in~\cite{xia2012superfast}.

\subsection{HSS rows and columns, HSS rank}
\label{sec:HSSrowscols}

Our construction of an HSS approximation to $C$ proceeds via compressing the so-called HSS rows and columns~\cite{benzi2016matrices, martinsson2011fast,xia2010fast}.

We call
\[
 C_v^{\mathrm{row}} = C(J_v, J_v^c), \quad J_v^c = \{ 0, \ldots, n-1\} \setminus J_v
\]
an HSS row. This selects the block row containing \smash{$C_v$}, but excludes the diagonal block in that row, as shown in Figure~\ref{fig:HSSrows_cols}.
Similarly, an HSS column is defined as \smash{$C_{v}^{\mathrm{col}} = C(J_{v}^c, J_{v})$}.  
As discussed in~\cite{xia2010fast}, the construction of the HSS matrix \smash{$\widetilde{C}$} with each block \smash{$\widetilde{C}_v$} of rank at most $p$ is possible if and only if every HSS row and column of \smash{$\widetilde{C}$} has rank at most $p$. 
This suggests to define the \smash{$(\epsilon, \mathcal{T})$}-HSS rank of $C$  as the smallest $p$ such that \smash{$\erank(C^{\mathrm{row}}_v), \erank(C^{\mathrm{col}}_v) \leq p$} for every $v \in \mathcal{T} \setminus \{0\}$. Applying Theorem~\ref{thm:bounds}
to $( C^{\mathrm{row}}_v )^*$ and
$C^{\mathrm{col}}_v$ with $m \le n/2$ and $\sep = 1$, 
yields the following result.
\begin{lemma} 
\label{lemma:HSS_rank}
For a perfectly balanced binary
tree $\mathcal{T}$, the integer
\[
p = \rho \big\lceil 2\pi^{-2} \log (2n) \log(4 \epsilon^{-1} ) \big\rceil
\]
 is an upper bound on the \smash{$(\epsilon, \mathcal{T})$}-HSS rank of $C$.
\end{lemma}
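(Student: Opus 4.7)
The plan is to reduce the claim directly to Theorem~\ref{thm:bounds}, following the hint given just before the lemma: apply the theorem to $(C_v^{\mathrm{row}})^*$ and to $C_v^{\mathrm{col}}$ for every non-root vertex $v \in \mathcal{T}$, in each case with $\sep = 1$ and with the column-span parameter $m$ equal to $|J_v|$, which is bounded by $n/2$ since $v$ is not the root of the perfectly balanced binary tree.

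The HSS column case is immediate. By construction, $J_v$ is a contiguous block of indices, so $C_v^{\mathrm{col}} = C(J_v^c, J_v)$ is an $(|J_v|,1)$ submatrix of $C$ in the sense of Definition~\ref{def:mlsubmatrix}: the columns span at most $|J_v| \le n/2$ consecutive indices, and since $J_v^c \cap J_v = \emptyset$ one has $|j-k| \ge 1$ for every $j \in J_v^c$ and $k \in J_v$; the complementary separation $|j-k-n| \ge 1$ is automatic from $0 \le j,k \le n-1$. The hypothesis $n \ge 2(m+\sep-1) = 2|J_v|$ of Theorem~\ref{thm:bounds} holds (with equality at level one). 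Plugging $m \le n/2$ and $\sep=1$ into the bound~\eqref{eq:erank_bnd} gives $\erank(C_v^{\mathrm{col}}) \le \rho \lceil 2\pi^{-2}\log(4m)\log(4/\epsilon)\rceil \le p$.

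For HSS rows, I would use that $\erank(X) = \erank(X^*)$ and apply the theorem to $(C_v^{\mathrm{row}})^* = C^*(J_v^c,J_v)$, which exchanges the roles of rows and columns so that the small contiguous index set $J_v$ now indexes the columns. Conjugate-transposing~\eqref{eq:cauchylike-disp} yields $\overline{D}\,C^* - C^*\overline{D} = \widetilde{H}\widetilde{G}^*$, a displacement equation with diagonal coefficients whose spectrum $\{\overline{\omega^{2j}}\}$ still lies on the unit circle and with right-hand side of rank at most $\rho$. The arcs containing $\Lambda(\overline{D}_{J_v^c})$ and $\Lambda(\overline{D}_{J_v})$ are just the complex conjugates of the arcs in the proof of Theorem~\ref{thm:bounds}, so the cross-ratio~\eqref{eq:cr_arcsnew} and hence the Zolotarev bound are unchanged. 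The same calculation as in the previous paragraph then gives $\erank(C_v^{\mathrm{row}}) = \erank((C_v^{\mathrm{row}})^*) \le p$.

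Since $v \in \mathcal{T}\setminus\{0\}$ was arbitrary, the maximum of $\erank(C_v^{\mathrm{row}})$ and $\erank(C_v^{\mathrm{col}})$ over all such $v$ is bounded by $p$, which is the definition of the $(\epsilon,\mathcal{T})$-HSS rank being at most $p$. The only mild wrinkle is the hypothesis $m \ge 2$ in Theorem~\ref{thm:bounds}: at the lowest level of $\mathcal{T}$ the block size may be one, but then the HSS row/column consists of a single row/column and its $\epsilon$-rank is at most $1 \le p$ trivially, so no separate argument is needed. I do not expect any genuine obstacle; the work is entirely in making sure the orientation (transposition) is correct so that the column-span parameter $m$ in Definition~\ref{def:mlsubmatrix} is applied to the small contiguous index set $J_v$ rather than to its complement.
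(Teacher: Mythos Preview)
Your proposal is correct and follows exactly the approach indicated by the paper, which proves the lemma in one line by applying Theorem~\ref{thm:bounds} to $(C_v^{\mathrm{row}})^*$ and $C_v^{\mathrm{col}}$ with $m\le n/2$ and $\sep=1$. You have simply spelled out the details---the verification of Definition~\ref{def:mlsubmatrix}, the transposed displacement equation for the row case, and the trivial $m=1$ edge case---that the paper leaves implicit.
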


\begin{figure}
\centering
\begin{tikzpicture}[scale=0.27, every node/.style={scale=0.87}]
%matrix tesselation
\draw[black, thick](0,0)--(8,0);
\draw[black, thick] (0,0)--(0,8);
\draw[black, thick] (8,0)--(8,8);
\draw[black, thick] (0,8)--(8,8);
\draw[black, thick](4,0)--(4,8);
\draw[black, thick](0,4)--(8,4);
\draw[black, thick](2, 4)--(2, 8); 
\draw[black, thick](0, 6)--(4, 6); 
\draw[black, thick](6, 0)--(6, 4); 
\draw[black, thick](4, 2)--(8,2);
\draw[black, thick](0, 7)--(2, 7); 
\draw[black, thick](1,6)--(1,8);
\draw[black, thick](2, 5)--(4,5); 
\draw[black, thick](3, 4)--(3, 6); 
\draw[black, thick](4, 3)--(6,3); 
\draw[black, thick](5,2)--(5, 4); 
\draw[black, thick](6,1)--(8,1); 
\draw[black, thick](7, 0)--(7, 2); 
%fill HSS row 
\draw [pattern=north west lines, pattern color=blue] (0,5) rectangle (2,6);
\draw [pattern=north west lines, pattern color=blue] (3,5) rectangle (8,6);
\draw [pattern=north west lines, pattern color=red] (0,4) rectangle (3,5);
\draw [pattern=north west lines, pattern color=red] (4,4) rectangle (8,5);
%label
\node[] at (4, -1) {(a)}; 
\end{tikzpicture}
\begin{tikzpicture}[scale=0.27, every node/.style={scale=0.87}]
\hspace{10pt}
%matrix tesselation
\draw[black, thick](0,0)--(8,0);
\draw[black, thick] (0,0)--(0,8);
\draw[black, thick] (8,0)--(8,8);
\draw[black, thick] (0,8)--(8,8);
\draw[black, thick](4,0)--(4,8);
\draw[black, thick](0,4)--(8,4);
\draw[black, thick](2, 4)--(2, 8); 
\draw[black, thick](0, 6)--(4, 6); 
\draw[black, thick](6, 0)--(6, 4); 
\draw[black, thick](4, 2)--(8,2);
\draw[black, thick](0, 7)--(2, 7); 
\draw[black, thick](1,6)--(1,8);
\draw[black, thick](2, 5)--(4,5); 
\draw[black, thick](3, 4)--(3, 6); 
\draw[black, thick](4, 3)--(6,3); 
\draw[black, thick](5,2)--(5, 4); 
\draw[black, thick](6,1)--(8,1); 
\draw[black, thick](7, 0)--(7, 2); 
%fill HSS row 
\draw [pattern=north west lines, pattern color=purple] (0,4) rectangle (2,6);
\draw [pattern=north west lines, pattern color=purple] (4,4) rectangle (8,6);
%label
\node[] at (4, -1) {(b)}; 
\end{tikzpicture}
\begin{tikzpicture}[scale=0.27, every node/.style={scale=0.87}]
\hspace{20pt}
%matrix tesselation
\draw[black, thick](0,0)--(8,0);
\draw[black, thick] (0,0)--(0,8);
\draw[black, thick] (8,0)--(8,8);
\draw[black, thick] (0,8)--(8,8);
\draw[black, thick](4,0)--(4,8);
\draw[black, thick](0,4)--(8,4);
\draw[black, thick](2, 4)--(2, 8); 
\draw[black, thick](0, 6)--(4, 6); 
\draw[black, thick](6, 0)--(6, 4); 
\draw[black, thick](4, 2)--(8,2);
\draw[black, thick](0, 7)--(2, 7); 
\draw[black, thick](1,6)--(1,8);
\draw[black, thick](2, 5)--(4,5); 
\draw[black, thick](3, 4)--(3, 6); 
\draw[black, thick](4, 3)--(6,3); 
\draw[black, thick](5,2)--(5, 4); 
\draw[black, thick](6,1)--(8,1); 
\draw[black, thick](7, 0)--(7, 2); 
%fill HSS row 
\draw [pattern=north west lines, pattern color=blue] (4,0) rectangle (6,2);
\draw [pattern=north west lines, pattern color=blue] (4,4) rectangle (6,8);
\draw [pattern=north west lines, pattern color=red] (6,2) rectangle (8,8);
%label
\node[] at (4, -1) {(c)}; 
\end{tikzpicture}
\begin{tikzpicture}[scale=0.27, every node/.style={scale=0.87}]
\hspace{30pt}
%matrix tesselation
\draw[black, thick](0,0)--(8,0);
\draw[black, thick] (0,0)--(0,8);
\draw[black, thick] (8,0)--(8,8);
\draw[black, thick] (0,8)--(8,8);
\draw[black, thick](4,0)--(4,8);
\draw[black, thick](0,4)--(8,4);
\draw[black, thick](2, 4)--(2, 8); 
\draw[black, thick](0, 6)--(4, 6); 
\draw[black, thick](6, 0)--(6, 4); 
\draw[black, thick](4, 2)--(8,2);
\draw[black, thick](0, 7)--(2, 7); 
\draw[black, thick](1,6)--(1,8);
\draw[black, thick](2, 5)--(4,5); 
\draw[black, thick](3, 4)--(3, 6); 
\draw[black, thick](4, 3)--(6,3); 
\draw[black, thick](5,2)--(5, 4); 
\draw[black, thick](6,1)--(8,1); 
\draw[black, thick](7, 0)--(7, 2); 
%fill HSS row 
\draw [pattern=north west lines, pattern color=purple] (4,4) rectangle (8,8);
%label
\node[] at (4, -1) {(d)}; 
\end{tikzpicture}
\caption{Various HSS rows and columns for a tree of depth $3$ (see Figure~\ref{fig:HODLRandTree}). In (a), the two HSS rows $C_9^{\mathrm{row}}$(blue) and $C_{10}^{\mathrm{row}}$ (red) are shown.  These are the children of the row $C_4^{\mathrm{row}}$, shown in (b). The two HSS columns $C_5^{\mathrm{col}}$ (red)  and $C_6^{\mathrm{col}}$ (blue)  in (c) are the children of the parent $C_2^{\mathrm{col}}$ in (d). }
\label{fig:HSSrows_cols}
\end{figure}

\subsection{Approximation by interpolative decompositions} 
\label{sec:int_decomp}

By~\cite[Thm. 4.7]{KMR2019}, there is an HSS matrix
$\widetilde C$ of HSS rank $p$, with the value of $p$ stated in Lemma~\ref{lemma:HSS_rank}, such that
$\|C- \widetilde C\|_2 \lesssim \sqrt{n} \epsilon$. Standard algorithms~\cite{xia2010fast} for constructing such a compression $\widetilde C$ proceed by recursively compressing the HSS blocks and rows in a bottom-up fashion. The compression on the upper levels by orthogonal projections is expensive, requiring already $\mathcal O(n^2)$ operations for just evaluating the entries of the corresponding HSS block columns/rows. Interpolative decompositions~\cite{cheng2005compression,martinsson2011fast, xia2012superfast} are a common approach to mitigate this problem. In this section, we explain how interpolative decompositions can be combined with fADI to cheaply construct HSS approximations to $C$.

\subsubsection{Interpolative decompositions on the leaf level}  \label{sec:leafinterpolation}

Consider a leaf $v \in \mathcal T$ with sibling $\tilde v$. Let \smash{$\mathbb J_v \subset J_v$} and \smash{$\mathbb K_{\tilde{v}} \subset J_{\tilde{v}}$} denote subsets, each of cardinality $p \ll m$. We aim at constructing 
a two-sided interpolative decomposition~\cite{cheng2005compression} for the corresponding off-diagonal block $C_v \in \C^{m \times m}$, $m = \nmin$, of the form
\begin{equation}
\label{eq:block_factors}
 C_v \approx \widetilde{C}_v = U_v B_{v, \tilde{v}} V_{\tilde{v}}^*, \quad B_{v, \tilde{v}} = C(\mathbb{J}_v, \mathbb{K}_{\tilde{v}}),
 \end{equation} 
 This decomposition is obtained from constructing one-sided 
interpolative decompositions for the corresponding HSS block row/column:
\begin{equation} 
\label{eq:HSSrowinterp2} 
C_v^{\mathrm{row}} \approx \widetilde{C}_v^{\mathrm{row}}  = U_v  C_v^{\mathrm{row}}(\mathbb{J}_v, \, :\,), \quad 
C_{\tilde v}^{\mathrm{col}} \approx \widetilde{C}_{\tilde v}^{\mathrm{col}}  = C_{\tilde v}^{\mathrm{col}}(\, :\,, \mathbb K_{\tilde v}) V_{\tilde v}^*.
\end{equation}
This requires that the selected subset $\mathbb{J}_v$ of rows represents a good approximation 
to the row space of $C_v^{\mathrm{row}}$
and, analogously, that the 
selected subset $\mathbb{K}_{\tilde v}$ of columns represents a good approximation 
to column space of $C_{\tilde v}^{\mathrm{col}}$. Both are instances of the column subset selection problem~\cite{broadbent2010subset}, for which numerous algorithms have been developed, including 
the column-pivoted QR (CPQR)~\cite{GolubVanLoan2013}, (strong) rank-revealing QR decomposition~\cite{gu1996efficient} or Osinsky's algorithm~\cite{Osinsky2023}.
However, applying such algorithms directly to HSS block rows/columns is expensive. For example, \smash{$C_v^{\mathrm{col}}$} is of size \hbox{$(n\!-\!m) \times m$}, so the cost depends at least linearly on the ``long" dimension $n$.  One can do better with carefully implemented randomized methods~\cite{martinsson2011fast,xia2012superfast}, but even this approach requires an initial precomputation that multiplies the uncompressed matrix $C$ with an $n \times \Theta(p)$ Gaussian random matrix.\footnote{In~\cite{xia2012superfast}, this step is greatly improved with a fast matrix-vector multiplication routine for $C$ based on its relationship to the Toeplitz matrix $T$.} 
In the following, we show how the interpolative decompositions~\eqref{eq:HSSrowinterp2} 
can be found deterministically using fADI with a cost that depends only on $m$ and $p$.

We focus on the selection of $\mathbb{J}_v$; the process for finding $\mathbb{K}_{\tilde v}$ is entirely analogous.
Set \smash{$X := C_v^{\mathrm{row}}$}.
We know from~\eqref{eq:cauchylike-disp} that $X$ satisfies \[
D_JX - XD_K = \widetilde G_J \widetilde H_K^*, \quad J = J_v, \quad K = J_v^{c}.
\]
After $k$ iterations of fADI, the approximation \begin{equation}\label{eq:adi2}
X \approx X^{(k)} = ZW^*, \quad  Z  \in \mathbb{C}^{m \times p }, \quad p = k \rho,
                                                \end{equation}
is constructed. To find an approximate one-sided row interpolative decomposition, the (large) matrix $W$ is actually not needed. A key advantage of fADI is that it constructs $Z$ independently from $W$ using only the $m \times \rho$ matrix \smash{$\widetilde G_J$} and the $m \times m$ diagonal matrix \smash{$D_J$} (see \cref{alg:fadicauchy}). In turn, the construction of $Z$ only requires $\mathcal{O}(m \rho)$ operations.

One continues from~\eqref{eq:adi2} by selecting a good subset of rows in the (smaller) factor $Z$. For convenience, 
our numerical experiments apply CPQR to $Z^*$ for this purpose. This has a computational cost of only \smash{$\mathcal{O}(m p^2)$} operations and is usually a safe choice.\footnote{Despite famous adversarial examples where CPQR fails, it is considered highly reliable in practice~\cite{gu1996efficient}.} Better theoretical bounds are obtained, at the same asymptotic cost, by using more sophisticated methods, such as Osinsky's algorithm~\cite{Osinsky2023}; see also~\cite{Cortinovis2024}. For this purpose, we first compute a QR decomposition $Z = QR$ with $Q \in \C^{m\times p}$. Applying Algorithm 2 from~\cite{Osinsky2023} to $Q^*$ returns an index set
$\mathbb J_v$ of cardinality $p$ such that
\begin{equation} \label{eq:osinsky}
 \|Z \cdot Z(\mathbb J_v,:)^{-1} \|_2 = 
 \|Q \cdot Q(\mathbb J_v,:)^{-1} \|_2 \le \sqrt{1+p(m-p)};
\end{equation}
see~\cite[Thm. 5]{Osinsky2023}. Setting $U_v = Z \cdot Z(\mathbb{J}_v,:)^{-1}$ concludes the construction of the desired
approximation $C_v^{\mathrm{row}} \approx U_v  C_v^{\mathrm{row}}(\mathbb{J}_v, \, :\,)$.

\begin{lemma} 
\label{lemma:int_decomp_error_leaf}
Set $X = C_v^{\mathrm{row}} \in \C^{m \times (n-m)}$.
The approximation $X \approx U_v X(\mathbb{J}_v, \, : \,)$ described above satisfies the error bound
\begin{equation} 
\label{eq:HSS_fadi_error}
\|X - U_v X(\mathbb{J}_v, \, : \,)\|_2 \leq 4 \xi^{-k} \big( 1 + \sqrt{1+p(m-p)}\big) \|X\|_2,
\end{equation}
with \smash{$\xi = \exp(  \pi^2 / ( 2 \log( 4 m  ) )$} and $p = k \rho$.
\end{lemma}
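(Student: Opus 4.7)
The plan is to split the total approximation error into an fADI convergence error and an interpolation-induced amplification, then bound each factor using ingredients already proved in the paper. The key observation is that the fADI approximation $X^{(k)} = Z W^*$ and the interpolation matrix $U_v = Z \cdot Z(\mathbb J_v,:)^{-1}$ together satisfy the clean identity $U_v X^{(k)}(\mathbb J_v,:) = Z \cdot Z(\mathbb J_v,:)^{-1} Z(\mathbb J_v,:) W^* = X^{(k)}$. Rearranging,
\begin{equation*}
 X - U_v X(\mathbb J_v,:) = (X - X^{(k)}) - U_v (X - X^{(k)})(\mathbb J_v,:),
\end{equation*}
so taking spectral norms, invoking the triangle inequality and submultiplicativity, and using the fact that restricting to a subset of rows does not increase the spectral norm yields
\begin{equation*}
 \|X - U_v X(\mathbb J_v,:)\|_2 \le (1 + \|U_v\|_2)\, \|X - X^{(k)}\|_2.
\end{equation*}

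Next, I would bound the two factors separately. Osinsky's inequality~\eqref{eq:osinsky}, applied to the $Q$-factor of $Z$ as described in \cref{sec:leafinterpolation}, gives $\|U_v\|_2 = \|Z \cdot Z(\mathbb J_v,:)^{-1}\|_2 \le \sqrt{1 + p(m-p)}$ immediately. For the fADI residual, I would apply the error identity~\eqref{eq:bnd_errfadi} to the displacement equation $D_{J_v} X - X D_{J_v^c} = \widetilde G_{J_v} \widetilde H_{J_v^c}^*$ satisfied by $X = C_v^{\mathrm{row}}$ (a special case of~\eqref{eq:dispY}). Because $J_v$ is a contiguous index set of cardinality $m$, the set $\{\omega^{2j}: j \in J_v\}$ lies on a closed arc while $\{\omega^{2k}: k \in J_v^c\}$ lies on the complementary closed arc, separated by circular gap $\sep = 1$ on both sides (as in the proof of \cref{thm:bounds}). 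Choosing the fADI shift parameters from \cref{lemma:Shift_parameters} for these two arcs and invoking the Zolotarev bound~\eqref{eq:bound_zolo_arcs} with $\sep = 1$ yields $\|X - X^{(k)}\|_2 \le 4\xi^{-k}\|X\|_2$ with $\xi = \exp(\pi^2/(2\log(4m)))$. Combining the three ingredients delivers~\eqref{eq:HSS_fadi_error}.

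The only point requiring mild extra care is that \cref{corr:ADIbounds} is stated for square off-diagonal HODLR blocks, whereas $X$ here is rectangular with the long column dimension $n-m$. However, the proof of \cref{thm:bounds} uses only the arc geometry of the two spectra and the rank of the right-hand side, never the equality $|J| = |K|$, and the admissibility condition $n \ge 2m$ is automatic for any $v$ strictly below the root. So this extension amounts to a rereading of the existing proof rather than a new argument, and I do not expect any substantive obstacle beyond spelling out the swapped-role application of \cref{thm:bounds} that puts the small arc on the row side.
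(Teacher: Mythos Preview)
Your proposal is correct and follows essentially the same approach as the paper: both use the identity $U_v X^{(k)}(\mathbb J_v,:) = X^{(k)}$ to reduce the error to $(1+\|U_v\|_2)\,\|X-X^{(k)}\|_2$, then bound $\|U_v\|_2$ via Osinsky's inequality~\eqref{eq:osinsky} and $\|X-X^{(k)}\|_2$ via the fADI/Zolotarev estimate~\eqref{eq:bound_zolo_arcs} with $\sep=1$. Your remark about the rectangular extension of \cref{corr:ADIbounds} is exactly what the paper means by ``a straightforward extension.''
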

\begin{proof}
Observe that
$$ \|X - U_v X(\mathbb{J}_v, \, : \,)\|_2 \leq  \|X - ZW^*\|_2 + \|ZW^* - U_v X(\mathbb{J}_v, \, : \,)\|_2.$$
The second term is bounded by
\[
 \|ZW^* - U_v X(\mathbb{J}_v, \, : \,)\|_2
 = \big\|
 U_v \big(
 Z(\mathbb J_v,:) W^* - X(\mathbb{J}_v, \, : \,)
 \big) 
 \big\|_2 \le \|U_v\|_2\|  ZW^* - X  \|_2.
\]
The proof is concluded by using~\eqref{eq:osinsky} for bounding $\|U_v\|_2$, and $\|X - ZW^*\|_2 \leq 4 \xi^{-k} \|X\|_2$,
which follows from a straightforward extension of~\cref{corr:ADIbounds}.
\end{proof}

We view~\eqref{eq:HSS_fadi_error} as the deterministic analogue to the error bounds supplied in~\cite[Eq.~3.6]{xia2012superfast} for interpolative decompositions based on randomized linear algebra. There, one finds  \smash{$ \Theta_v X(\mathbb J_v, \; : \;)$}, which is a rank $p + \mu$ approximate row interpolative decomposition for $X$, with $\mu$ selected as a small  oversampling parameter~\cite{halko2011finding}. Properties of Gaussian random matrices can be used to show that with probability \smash{$1 - 6 \mu^{-\mu}$}, the error is bounded as~\cite[Eq.~(3.6)]{xia2012superfast} 
\begin{equation} 
\label{eq:Rand_err}
\|X - \Theta_v X(\mathbb J_v, \; : \;)\|_2 \leq \big( 1 + 11 \sqrt{m(p + \mu)}\big) \sigma_{p+1}(X). 
\end{equation}
Using~\eqref{eq:bound_zolo_arcs} and~\eqref{eq:zolobound_svs}, our bounds show that \smash{$\sigma_{p+1}(X) \leq  4 \xi^{-\lfloor p/\rho \rfloor } \| X\|_2$}, where $\xi$ is given in~\eqref{eq:bound_zolo_arcs}. This makes~\eqref{eq:Rand_err} explicit so that if one prefers to use a solver based on randomized linear algebra, $p$ can be chosen a priori in order to ensure (w.h.p.) that the relative error is bounded by $\epsilon$.

\subsection{HSS approximation by recursive interpolative decomposition}
\label{sec:merge}

Once the interpolative decomposition~\eqref{eq:block_factors} is known for each block on the finest level \smash{$\widehat{\ell}$} of \smash{$\mathcal{T}$}, the remaining factors associated with the non-leaf vertices of \smash{$\mathcal{T}$} are determined
in a bottom-up fashion. We briefly describe the process of merging from the leaf level up to the next coarsest level, but refer to~\cite{wilber2021computing} for substantial details on such processes, which are admittedly technical but also standard and well-established.

Let $v$ be a vertex on level \smash{$\ell = \widehat{\ell}\! -\! 1$}. Then, as before, we aim at finding an approximation \smash{$C_v \approx \widetilde{C}_v = U_vB_{v, \tilde{v}}V_{\tilde{v}}^*$}. It is assumed that the outer factor  \smash{$U_v$} and \smash{$V_{\tilde{v}}$} satisfy the relations~\eqref{eq:HSS_facteveryorU} and~\eqref{eq:HSS_factorV}, respectively, and it thus only remains to determine \smash{$R_v$}, \smash{$W_{\tilde{v}}$}, and the index sets $\mathbb J_v$, $\mathbb K_{\tilde{v}}$ defining \smash{$B_{v, \tilde{v}} = C(\mathbb J_v, \mathbb K_{\tilde{v}})$}.  Consider the associated HSS row \smash{$C_v^{\mathrm{row}}$}. Its children HSS rows \smash{$C_{2v+1}^{\mathrm{row}}$, $C_{2v+2}^{\mathrm{row}}$} are approximated by the algorithm discussed in Section~\ref{sec:leafinterpolation}:
$$C_{2v+j}^{\mathrm{row}}(\, : \, , \, J_v^c) \approx U_{2 v+j} C_{2v+j}^{\mathrm{row}}(\mathbb{J}_{2v+j}, \, J_v^c\,).$$ Using these decompositions, we can write
\begin{equation} 
\label{eq:non_leaf_row}
 C_v^{\mathrm{row}} \approx \begin{bmatrix} U_{2v+1} & 0 \\ 0 & U_{2v+2} \end{bmatrix} C_v(\widehat{\mathbb J}, \, : \,),
 \end{equation} 
where \smash{$\widehat{\mathbb J}$} is the index set with respect to the block \smash{$C_v^{\mathrm{row}}$} that selects the $2p$ rows indexed by \smash{$\mathbb{J}_{2v+1} \cup \mathbb{J}_{2v+2}$}; see~\cref{fig:basis_rows}~(b).  

With the cascading factors in~\eqref{eq:HSS_facteveryorU} in mind, we simply apply the fADI-based row interpolative decomposition from Section~\ref{sec:leafinterpolation} to the submatrix $C_v(\widehat{\mathbb J},:)$.  Theorem~\ref{thm:bounds} and Lemma~\ref{lemma:int_decomp_error_leaf} hold for this submatrix, which is also Cauchy-like and therefore has a useful displacement structure. 
This process chooses a size-$p$ subset $\mathbb J_v$ of the $2 p$ rows comprising \smash{$C_v(\widehat{\mathbb{J}}, \, : \,)$}; see \cref{fig:basis_rows}~(c), and it also produces $R_v$.  An analogous method is applied to \smash{$C_{\tilde{v}}^{\mathrm{col}}$} to find \smash{$\mathbb K_{\tilde{v}}$} and \smash{$W_{\tilde{v}}$}. This procedure can be repeated at each level as we traverse \smash{$\mathcal{T}$} from the bottom to the top in order to produce an approximate HSS factorization of the entire matrix $C$~\cite{martinsson2011fast}. 

\begin{figure}
\centering
\label{fig:basis_rows}
\begin{tikzpicture}[scale=0.45, every node/.style={scale=0.7}]
%matrix tesselation
\draw[black](0,0)--(0,2);
\draw[black] (0,0)--(8,0);
\draw[black] (0,2)--(8,2);
\draw[black] (8,0)--(8,2);
\draw[black] (2,0)--(2,1);
\draw[black] (0,1)--(8,1);
\draw[black] (1,0)--(1,2);
%\draw[black, thick] (4,0)--(4,2);
\draw[purple, thick](1, 1.5)--(8, 1.5); 
\draw[purple, thick](1, 1.2)--(8, 1.2); 
\draw[purple, thick](1, 1.8)--(8, 1.8); 
\draw[blue, thick](2, .4)--(8, .4); 
\draw[blue, thick](2, .23)--(8, .23); 
\draw[blue, thick](2, .7)--(8, .7);  
\draw[blue, thick](0, .4)--(1, .4); 
\draw[blue, thick](0, .23)--(1, .23); 
\draw[blue, thick](0, .7)--(1, .7);  
\node[] at (4, -1) {(a)}; 
\end{tikzpicture}
\begin{tikzpicture}[scale=0.45, every node/.style={scale=0.7}]

\hspace{8pt} 

\draw[black](0,0)--(0,2);
\draw[black] (0,0)--(8,0);
\draw[black] (0,2)--(8,2);
\draw[black] (8,0)--(8,2);
\draw[black] (2,0)--(2,2);
\draw[black] (0,1)--(2,1);
\draw[black] (1,0)--(1,2);
%\draw[black, thick] (4,0)--(4,2);
\draw[purple, thick](2, 1.5)--(8, 1.5); 
\draw[purple, thick](2, 1.2)--(8, 1.2); 
\draw[purple, thick](2, 1.8)--(8, 1.8); 
\draw[blue, thick](2, .4)--(8, .4); 
\draw[blue, thick](2, .23)--(8, .23); 
\draw[blue, thick](2, .7)--(8, .7);  
%\draw[blue, thick](0, .4)--(1, .4); 
%\draw[blue, thick](0, .23)--(1, .23); 
%\draw[blue, thick](0, .7)--(1, .7);  
\node[] at (4, -1) {(b)}; 
\end{tikzpicture}
\begin{tikzpicture}[scale=0.45, every node/.style={scale=0.7}]

\hspace{16pt}

\draw[black](0,0)--(0,2);
\draw[black] (0,0)--(8,0);
\draw[black] (0,2)--(8,2);
\draw[black] (8,0)--(8,2);
\draw[black] (2,0)--(2,2);
\draw[black] (0,1)--(2,1);
\draw[black] (1,0)--(1,2);
%\draw[black, thick] (4,0)--(4,2);
\draw[purple, thick](2, 1.5)--(8, 1.5); 
\draw[purple, thick](2, 1.8)--(8, 1.8); 
\draw[blue, thick](2, .4)--(8, .4); 
%\draw[blue, thick](2, .23)--(8, .23);
\node[] at (4, -1) {(c)}; 
\end{tikzpicture}
\caption{ (a) Selected row vectors indexed by \smash{$\mathbb{J}_{2v+j}$} for the two HSS rows \smash{$C_{2v+j}^{\mathrm{row}} \approx U_{2v+j} C_{2v+j}^{\mathrm{row}}(\mathbb{J}_{2v+j}, \, : \,)$},  $j = 1$ (red) and $j = 2$ (blue).  (b) The HSS row \smash{$C_v^{\mathrm{row}}$} is over-resolved by~\eqref{eq:non_leaf_row}, a row interpolative decomposition with  basis vectors taken as the union of the subsampled basis vectors (colored lines) of the children rows.  At this coarser level, we re-apply  interpolative decomposition to the submatrix of \smash{$C_v^{\mathrm{row}}$} consisting only of the colored basis row vectors in (b). This compresses the representation  by choosing a subselection of the colored row vectors as a new, reduced basis approximately spanning the row space of \smash{$C_v^{\mathrm{row}}$}, as shown in (c).}
\end{figure}

\subsection{A faster HSS factorization scheme}
\label{sec:cauchyhadamard}
As with the HODLR approximation to $C$, the identity~\eqref{eq:cauchydisp} can be used to improve the overall cost of the HSS factorization.  We take advantage of this when performing fADI on the lowest level $\hat \ell$ of the partitioning determined by $\mathcal T$. It suffices to find fADI approximants only for the first HSS row and column of $\mathscr{C}$.  Then, we use these approximations and~\eqref{eq:hadamard_off} to cheaply compute the left factors of the low-rank approximations for every level-$\hat \ell$ HSS row and column of $C$. Once this is done, CPQR or Osinsky's algorithm are used find interpolative decompositions from each of these low rank approximations. 

In principle, one could go further with this idea, and cheaply compute interpolative decompositions \smash{$\mathscr{C}_v = \mathcal{U}_v \mathscr{C}(\mathbb{J}_v, \mathbb{K}_{\tilde{v}}) \mathcal{V}_{\tilde{v}}^*$} on every partition level. When the structure of $\mathscr{C}$ is fully exploited and approximations to the submatrices of $C$ are stored in the factored form shown as in~\eqref{eq:hadamard_adi}, the cost for finding an approximate HSS factorization only requires \smash{$\mathcal{O}(\rho^3 \log^3 n \log^3 1/\epsilon)$} operations, that is, this step only depends poly-logarithmically (and not even linearly) on $n$.  This is an effective format for some operations (e.g., matrix-vector products). For solving linear systems,~\eqref{eq:hadamard_off} must be used to assemble low rank approximations to each \smash{$C_v$}. A similar idea is discussed in~\cite{chandrasekaran2007superfast, xia2012superfast}. We note that applying~\eqref{eq:hadamard_off} destroys the interpolative structure of the HSS factors, ruling out the use of the accelerated ULV-like solver from~\cite{xia2012superfast} in Step 3 of Algorithm~\ref{alg:generalsolver}.

\section{A fast HSS-based solver} \label{sec:practicalsolver}

Without exploiting~\eqref{eq:hadamard_adi}, the computational complexity of constructing an HSS approximation to $C$ with our fADI-based interpolative decomposition scheme requires $\mathcal{O}(n (\rho \log n \log 1/\epsilon )^2 )$ operations; a more detailed complexity
analysis can be found in~\cite[Ch.~4]{wilber2021computing}. Putting this together with existing HSS-based solvers for linear systems, the overall complexity for solving $Tx = b$ via \cref{alg:generalsolver} is also $\mathcal{O}(n (\rho \log n \log 1/\epsilon )^2 )$. 

We remark that the bounds in \cref{thm:bounds} indicate that a solver for $Tx=b$ with linear asymptotic complexity in $n$ appears to be possible. This is because submatrices of $C$ sufficiently separated from the main diagonal have $\epsilon$-ranks that are constant with respect to $n$. An appropriate hierarchical matrix approximation with strong admissibility could exploit this fact. Linear inversion routines for such matrices are available~\cite{ambikasaran2014inverse, minden2017recursive}, but the analysis and implementation of these methods are beyond the scope of this paper.

\subsection{Numerical results}

\begin{figure}[t!]
 \centering
\includegraphics[scale = .41]{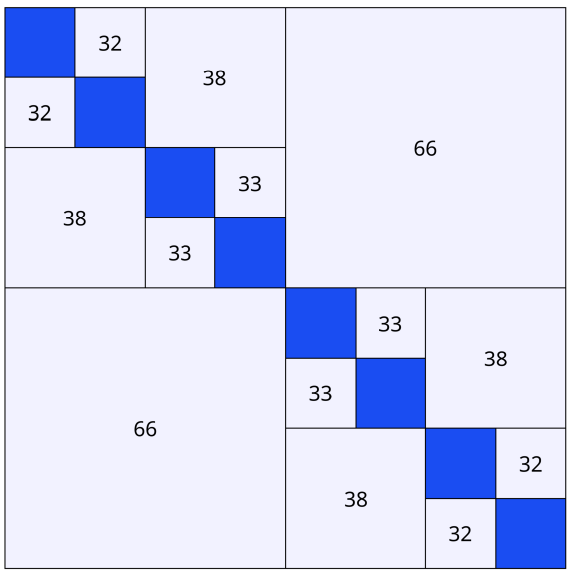}
\put(-112, 116){ \tiny $\|C-\widetilde C\|_2/\|C\|_2 \approx 1.8 \times 10^{-14}$}
\includegraphics[scale = .42]{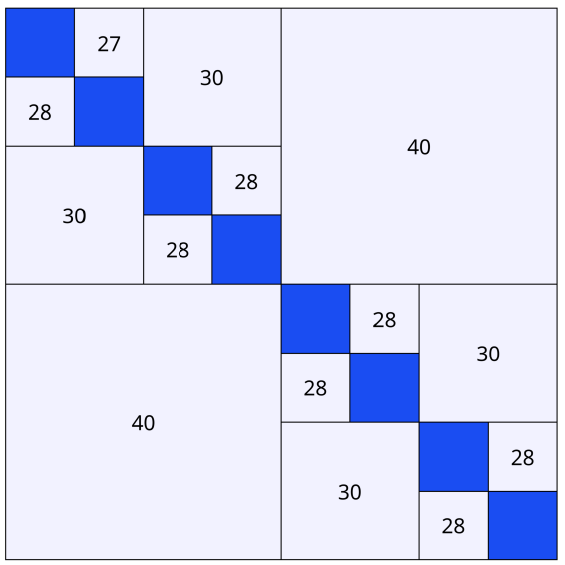}
\put(-108, 117){ \tiny $\|C-\widetilde C\|_2/\|C\|_2 \approx 6.8 \times 10^{-8}$}
\includegraphics[scale = .41]{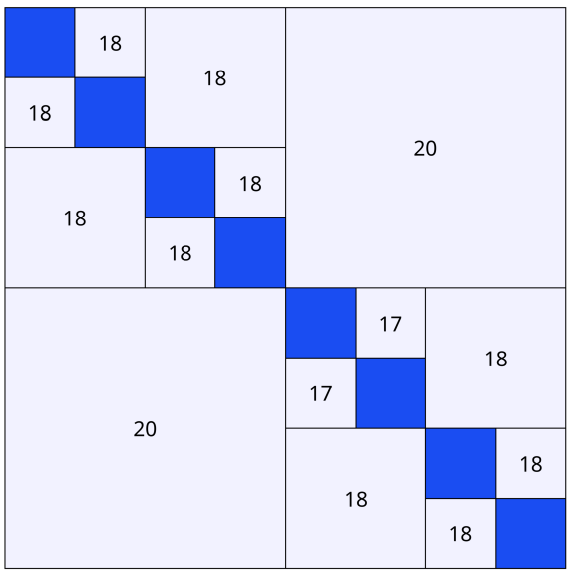}
\put(-108, 117){ \tiny $\|C-\widetilde C\|_2/\|C\|_2 \approx 2.0 \times 10^{-4}$}
  \caption{The ranks of off-diagonal blocks of an HSS approximation $\widetilde C\approx C$ with 4 levels for different choices of the tolerance $\epsilon$. The ranks of each block are adaptively determined using the bounds in \Cref{thm:bounds} and then low rank approximations are constructed using the fADI-based interpolative decomposition.}
  \label{fig:ranksHSS}
  \end{figure}

To validate the method based on our theoretical results, we include preliminary numerical results based on an implementation of Algorithm~\ref{alg:generalsolver} in MATLAB that uses the fADI-based interpolative decomposition scheme to construct an HSS approximation to $C$. Our method is built on top of the hm-toolbox~\cite{massei2020hm} that can be used to construct and compute with HSS, HODLR, and similar matrices. %

\begin{figure} 
 \begin{minipage}{.42\textwidth} 
 \centering
 \hspace{1cm}
  \begin{overpic}[width=\textwidth]{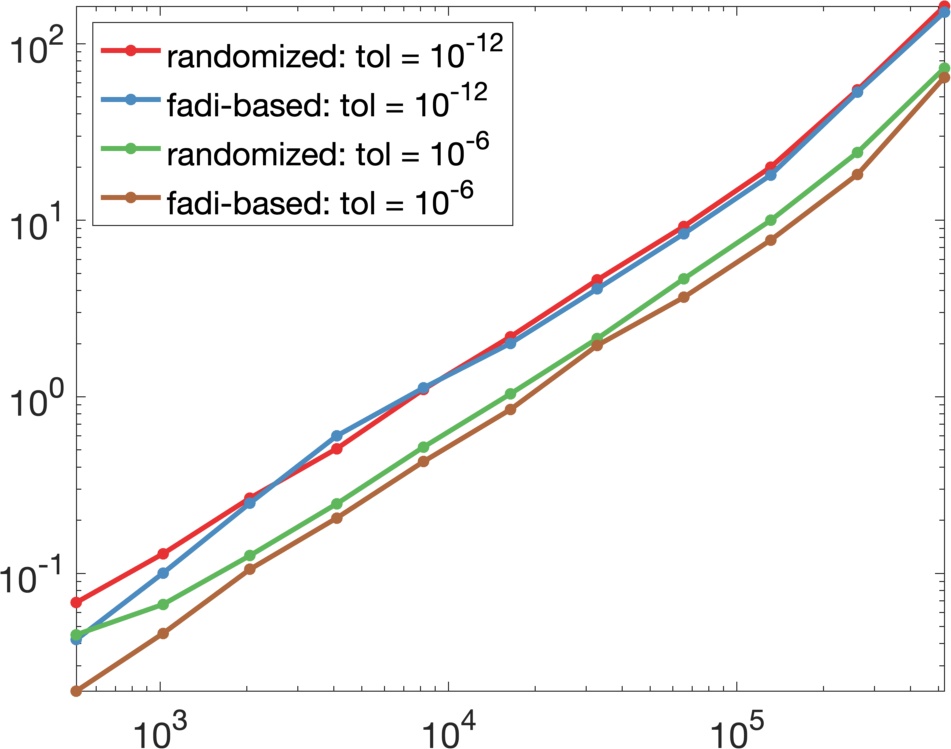}
  \put(45, -5){ \small k}
  \put(-7, 30){\rotatebox{90}{ \small rel. error}}
  \end{overpic}
  \end{minipage} \,
   \begin{minipage}{.4\textwidth} 
  \begin{tabular}{| c | c | c |}
 \hline
 $\epsilon$ & $\|C - \widetilde{C}\|_2/\|C\|_2$  & $\|x - \widehat{x}\|_2/\|x\|_2$    \\
 \hline  
 $10^{-3}$ & $1.887 \times 10^{-3}$ & $5.648 \times 10^{-3}$  \\
 $10^{-6}$ & $4.567 \times 10^{-7}$ & $9.110 \times 10^{-7}$  \\
  $10^{-9}$ & $3.623 \times 10^{-12}$ & $4.611 \times 10^{-11}$  \\
    $10^{-12}$ & $6.445 \times 10^{-14}$ & $3.431 \times 10^{-13}$  \\
 \hline
\end{tabular}
 \hspace{1cm}

  \end{minipage}
  \vspace{.2cm}
  \caption{Left: Time in seconds required for constructing the HSS approximation $\widetilde C$ of $C$ versus matrix size $n$. We compare the fADI-based scheme described in Section~\ref{sec:HSSintro} with the randomized method described in~\cite{xia2012superfast}. Tests were performed for two different tolerance parameter settings. Right: The ADI-based fast solver for $Tx =b$ comes with explicit low rank approximation error bounds and can be tuned to choose the $(\epsilon, \mathcal T)$-HSS rank adaptively to a given tolerance. The table displays the relative accuracy of $\widetilde{C}$, an approximate HSS factorization of $C$, and the relative accuracy of the computed solution $\widehat{x}$, for various choices of  $\epsilon$. For these tests, the Toeplitz matrices $T$ and right-hand sides $b$ were chosen randomly.}
  \label{fig:timingsHSS}
  \end{figure}
  
Our bounds on the low-rank approximation error can be used to adaptively choose the parameters given a tolerance $\epsilon > 0$.  Specifically, we use the bound of Theorem~\ref{thm:bounds} to determine $p$ so that \smash{$\erank(C_v^{\mathrm{row}})$ and  $\erank(C_v^{\mathrm{col}})$} are bounded by $p$ for every $v \in \mathcal{T}$. Along with these bounds, we automatically compute quasi-optimal fADI shift parameters via \cref{lemma:Shift_parameters} as needed for the fADI-based compression of the HSS rows and columns. This simplified strategy is designed so that \smash{$\|C -\widetilde{C}\|_2 \approx \epsilon \|C\|_2$}. The adaptivity of the ranks is shown for an HSS approximation to a $1024 \times 1024$ Cauchy-like matrix in~\Cref{fig:ranksHSS}. The method works well in practice, as demonstrated in Figure~\ref{fig:timingsHSS}.  Alternatively, one can devise a more elaborate scheme that uses Lemma~\ref{lemma:HSS_rank} to guarantee (in exact arithmetic) that \smash{$\|C - \widetilde{C} \|_2 \leq \epsilon \|C\|_2$}.  

As mentioned, we use CPQR within the construction of $\widetilde{C}$ in Step 2 of Algorithm~\ref{alg:generalsolver}.
Once the HSS matrix $\widetilde{C}$ is determined, we apply the built-in ULV solver in \texttt{hm-toolbox} to solve $\widetilde{C}\widetilde{x} = \widetilde{b}$ with only $\mathcal{O}(n p)$ flops. One could instead implement the ULV-like solver from~\cite{xia2012superfast} that omits orthogonalization to gain even more efficiency in the solve stage.

Our implementation takes  advantage of the similarity structure of $C$ on the leaf level, but not on the upper level.
 We observe that the performance of the method  matches that of the  Toeplitz solver in~\cite{xia2012superfast}, which is based on an HSS approximation of $C$ via randomized numerical linear algebra. To compare the methods, we implemented the solver in~\cite{xia2012superfast} in MATLAB, using our bounds to guide the estimates on the ranks of the HSS rows and columns.  Numerical results in Figure~\ref{fig:timingsHSS} confirm that these different compression schemes perform similarly in practice.  In this example, an $n \times n$ Toeplitz matrix is constructed by selecting entries for each diagonal randomly from a uniform distribution over $[0, 1]$. The results verify the asymptotic complexity of the method.

 \section{Conclusions}
 
 We have developed new ADI-based strategies for cheaply and reliably approximating Toeplitz-like matrices with hierarchical low-rank matrices, specifically HODLR and HSS matrices. We have demonstrated how such an approximation can be used to develop a competitive solver for linear systems with Toeplitz structure. Other potential applications include
 eigenvalue problems~\cite{Ou2022}, matrix functions~\cite{Casulli2024}, and matrix equations~\cite{KMR2019} involving Toeplitz and Toeplitz-like matrices.

\section*{Acknowledgments}

We are grateful to Robert Luce for early work and invaluable conversations on this topic. We also thank Alex Townsend and Anil Damle for reading and discussing an early version of the manuscript with us, and we thank an anonymous referee for many helpful suggestions.

\bibliographystyle{siam}
\bibliography{refs}

\end{document}